\documentclass[12pt]{article}
\usepackage{epsfig}
\usepackage{amsfonts}
\usepackage{amssymb}
\usepackage{amsmath}
\usepackage{amsthm}
\usepackage{latexsym}
\usepackage{graphicx}
\usepackage{bm}
\usepackage{tabularx}
\usepackage{booktabs} 
\usepackage{enumerate}
\usepackage{caption}
\usepackage[titletoc]{appendix}
\usepackage[numbers]{natbib}
\usepackage{bbm}
\usepackage{url}
\usepackage{subfig}
\usepackage{hyperref}
\usepackage{color}
\usepackage{xcolor}
\usepackage[section]{placeins}
\usepackage[thinlines]{easytable}
\usepackage{afterpage}


\setlength{\oddsidemargin}{0in} \setlength{\topmargin}{-.5in}
\setlength{\textheight}{9in} \setlength{\textwidth}{6.5in}

\catcode`\@=11





\theoremstyle{plain}
\newtheorem{theorem}{Theorem}[section]

\theoremstyle{definition}

\theoremstyle{remark}

\graphicspath{ {C:/Users/doldop/Dropbox/Philip_Doldo/Mean Field/DDE System Plots/} }

\begin{document}

\title{Queues with Delayed Information: Analyzing the Impact of the Choice Model Function}
\author{   
Philip Doldo \\ Center for Applied Mathematics \\ Cornell University
\\ 657 Rhodes Hall, Ithaca, NY 14853 \\  pmd93@cornell.edu  \\ 
\and
  Jamol Pender \\ School of Operations Research and Information Engineering \\ Center for Applied Mathematics \\ Cornell University
\\ 228 Rhodes Hall, Ithaca, NY 14853 \\  jjp274@cornell.edu  \\ 
 }


\maketitle
\begin{abstract}

In this paper, we study queueing systems with delayed information that use a generalization of the multinomial logit choice model as its arrival process. Previous literature assumes that the functional form of the multinomial logit model is exponential. However, in this work we generalize this to different functional forms. In particular, we compute the critical delay and analyze how it depends on the choice of the functional form. We highlight how the functional form of the model can be interpreted as an exponential model where the exponential rate parameter is uncertain. Furthermore, the rate parameter distribution is given by the inverse Laplace-Stieltjes transform of the functional form when it exists. We perform numerous numerical experiments to confirm our theoretical insights.
\end{abstract}


\section{Introduction} \label{CHOICE_MODEL_sec_intro}

The field of queueing theory has been dedicated to understanding the dynamics of queueing systems and their impact on society. In an age where information is readily available, it is an important problem to understand how information affects the decision making of customers who want to join queueing systems. In some applications, such as amusement parks, transportation systems, and even telecommunication systems, it has been observed that the information given to the customers can potentially be delayed \citet{doldo2021queues, novitzky2020update, pender2020stochastic, pender2018analysis, nirenberg2018impact}. 
   This delay in information can be caused by factors such as the fact that it can take time to process and send information about queue lengths to the customers and thus the information that customers actually receive describes the queue lengths from some amount of time in the past. Queueing systems where customers are provided with delayed information have been studied extensively, see for example \citet{novitzky2020limiting, doldo2021mean, doldo2020multi, doldo2021breaking, novitzky2020queues, novitzky2019nonlinear, Lipshutz2018, whitt2021many}. 

There are many different types of delayed information that customers can be provided with and different choices can lead to different choice models and thus different queueing system dynamics. One of the most common choices used in the literature is where the customer is provided with the lengths of the queues from some amount of time in the past. This delay in information is often modeled with a constant delay, as is the case in \citet{mitzenmacher2000useful, raina2005buffer, kamath2015car, raina2015stability, lipshutz2015existence, lipshutz2017exit, Lipshutz2018, nirenberg2018impact, novitzky2020limiting, doldo2021breaking, novitzky2019nonlinear, pender2017queues, pender2018analysis, whitt2021many}. 

Different types of queue length information with a constant delay have been used, see \citet{doldo2021mean} for example, which considers so-called mean-field queue length information with a constant delay. Other types of information that have been considered include delayed velocity information and moving average information, see \citet{dong2018impact, novitzky2020limiting, pender2017queues, novitzky2019nonlinear}. Some of the literature has examined queueing systems where the delay is not constant. For example, a queueing model that uses delayed updating information is examined in \citet{doldo2021queues, novitzky2020update}. Additionally, queueing models with distributed delays have been studied by considering distributed delay equation models in \citet{doldo2020multi, novitzky2020queues}, though it is worth noting that distributed delay models exhibit different dynamics in general than simply replacing the constant delay with a random variable in standard models, as discussed in \citet{doldo2021note}. 

In this paper, we will assume a similar model in spirit of \citet{pender2017queues}. However, our model uses a generalized version of the multinomial logit choice model which typically assumes an exponential form. The exponential form is related to a linear utility function, however our functional form will correspond to a nonlinear utility function. One of our main goals of this work is to understand how this nonlinearity of the utility function affects the critical delay which induces a Hopf bifurcation. As discussed in more detail in Section \ref{CHOICE_MODEL_hopf_section}, we will focus our attention on utilities that incorporate the complementary cumulative distribution function of some probability distribution. We will examine how the choice of probability distribution used to induce the choice model can lead to different system dynamics. 

 \subsection{Main Contributions of Paper}

The contributions of this work can be summarized as follows:    
\begin{itemize}
\item We consider queueing system choice models informed by more general utilities that are built from the complementary cumulative distribution functions of some probability distribution.
\item We solve for an equilibrium solution and compute the critical delay at which the queueing system exhibits a change in stability in terms of the hazard function corresponding to the given probability distribution.
\item We numerically examine how the queueing dynamics change when different probability distributions induce the choice model, including how stability properties differ between various common probability distributions.
\end{itemize} 


\subsection{Organization of Paper}

The remainder of this paper is organized as follows. In Section \ref{CHOICE_MODEL_model_introduction_section} we introduce the class of queueing models that we will be analyzing. In Section \ref{CHOICE_MODEL_hopf_section}, we find an equilibrium solution and compute the critical delay of our queueing system. In Section \ref{CHOICE_MODEL_eigenvalue_section} we consider various probability distributions and how their corresponding choice models impact the dynamics of the queueing system. In Section \ref{CHOICE_MODEL_conclusion_section} we give concluding remarks and discuss possible extensions.


\section{Generalized Multinomial Logit Queueing Model}
\label{CHOICE_MODEL_model_introduction_section}

In this section we describe the DDE system that we are interested in analyzing. The previous literature examines delay differential equation (DDE) fluid models that are based on the following model of an $N$-queue system 
\begin{eqnarray}
\overset{\bullet}{q}_i(t) &=& \lambda \frac{\exp(- \theta q_i(t - \Delta))}{\sum_{j=1}^{N} \exp( - \theta q_j(t - \Delta))} - \mu q_i(t), \hspace{5mm} i = 1, ..., N
\label{CHOICE_MODEL_system1}
\end{eqnarray} where $q_i(t)$ represents the length of the $i^{\text{th}}$ queue at time $t$ for $i = 1, ..., N$, $\lambda > 0$ is the arrival rate, and $\mu > 0$ is the service rate of the infinite-server queue, $\theta > 0$ is a choice model parameter, and $\Delta > 0$ is a time delay. The arrival rate $\lambda$ is multiplied by the probability of joining the $i^{\text{th}}$ queue which is determined by the choice model which, in this case, is based on a multinomial logit model. The choice model takes the delayed queue lengths as input to model providing customers with delayed information about the queue lengths. More specifically, the logit choice model used in Equation \ref{CHOICE_MODEL_system1} assumes that the utility of an agent joining the $i^{\text{th}}$ queue is 
\begin{eqnarray}
U_i = - \theta q_i(t - \Delta) + \epsilon_i
\end{eqnarray}
where $\epsilon_i$ is distributed according to a standard Gumbel distribution. See \cite{train2009discrete} for the details regarding how to construct such a choice model. We note that the deterministic portion of the utility is negative so that maximizing the utility corresponds to joining a queue with a shorter length (neglecting the random component of the utility as some realizations of the utility could be larger despite having a larger queue length if the realization of the corresponding random component of the utility happens to be small enough). 

In this paper, we will focus on analyzing the more general DDE system 
\begin{eqnarray}
\overset{\bullet}{q}_i(t) &=& \lambda \frac{ \bar{G}(q_i(t - \Delta))}{\sum_{j=1}^{N} \bar{G} ( q_j(t - \Delta))} - \mu q_i(t), \hspace{5mm} i = 1, ..., N
\label{CHOICE_MODEL_dde_system}
\end{eqnarray}
which uses a more general functional form of the multinomial logit model which is based on a more general utility in the form
\begin{eqnarray}
U_i = \log \left( \bar{G} ( q_i(t - \Delta))  \right) + \epsilon_i
\end{eqnarray}
where $\bar{G} : \mathbb{R} \to [0, \infty)$ is a decreasing function (so that longer queue lengths correspond to smaller utilities, neglecting randomness) and $\epsilon_i$ is again distributed according to a standard Gumbel distribution. This allows the utility to depend nonlinearly on the delayed queue length. We note that the choice model used in Equation \ref{CHOICE_MODEL_system1} corresponds to when \begin{eqnarray}
\bar{G}(x) = \exp(-\theta x)
\end{eqnarray} which happens to be the complementary cumulative distribution function corresponding to an exponential distribution with parameter $\theta$. This motivated us to focus on the case where $\bar{G}$ is a complementary cumulative distribution function corresponding to some probability distribution as this may be a particularly interesting class of decreasing functions to consider. Under such a choice model, the probability of joining the $i^{\text{th}}$ queue is \begin{eqnarray}
\frac {\bar{G} (q_i(t - \Delta))}{\sum_{j=1}^{N} \bar{G} (q_j(t - \Delta))}.
\end{eqnarray}

Additionally, some complementary cumulative distribution functions can be written as the moment-generating function of some random variable $Y < 0$ (negativity of $Y$ is important so that the moment-generating function is a decreasing function) so that 
\begin{eqnarray}
\bar{G}(x) = M_Y(x) := \mathbb{E}[e^{ Y x}]
\end{eqnarray}
\noindent where the expectation is taken over the random variable $Y$. An example of such a case is when the complementary cumulative distribution function corresponds to a hyperexponential distribution so that
\begin{eqnarray}
\bar{G}(x) = \sum_{k=1}^{m} p_k e^{- \theta_k x} = \mathbb{E}[e^{Y x}]
\end{eqnarray} where $Y$ is a discrete random variable where $Y = - \theta_k < 0$ with probability $p_k$ for $k=1,...,m$. In such a case, we note that the deterministic part of the utility can be viewed as the cumulant-generating function of the random variable $Y$, that is 
\begin{eqnarray}
U_i = \log(\mathbb{E}[e^{Y q_i(t - \Delta)}]), \hspace{5mm} i=1,..., N.
\end{eqnarray}
We can interpret such a utility as one corresponding to an exponential distribution where we have some uncertainty about the parameter (modeled by the random variable $-Y > 0$) of the distribution and account for this uncertainty by averaging over the distribution that we assume the parameter follows. If we view the utility as a nonlinear transformation of a linear quantity (namely, $ - Y q_i(t-\Delta)$), then we can view this as averaging over the possible values for the slope of this linear quantity. Of course, such choices of $\bar{G}$ can be interpreted as Laplace transforms. For ease of notation, let $Z := -Y > 0$ so that
\begin{align}
\bar{G}(x) = M_Y(x) &= \mathbb{E}[e^{Yx}]\\
&= \mathbb{E}[e^{-Zx}]\\
&= \int_{0}^{\infty} e^{-zx} f_Z(z) dz\\ 
&= \mathcal{L}[f_Z](x)
\end{align}
where $f_Z$ is the probability density function of $Z$ and $\mathcal{L}$ denotes the Laplace transform. It follows that we can directly recover the density $f_Z$ from $\bar{G}(x)$ by applying the inverse Laplace transform so that 
\begin{eqnarray}
f_Z = \mathcal{L}^{-1}[\bar{G}].
\label{CHOICE_MODEL_inverse_laplace_eqn}
\end{eqnarray}
This relation gives us a way of interpreting the choice models corresponding to various distributions as being induced by an exponential distribution with uncertainty in its parameter. Ultimately, we will concern ourselves with better understanding the dynamics of queueing systems with choice models induced by different probability distributions.


\section{Queueing System Stability Analysis}
\label{CHOICE_MODEL_hopf_section}

In this section, we will perform a stability analysis of the queueing system with the generalized functional form of the multinomial logit model. We will find an equilibrium solution of the system \ref{CHOICE_MODEL_dde_system} and use this equilibrium solution to perform a linearization which will allow us to compute the critical delay of this system.

It is well-known that when treating the delay $\Delta$ as a bifurcation parameter, the DDE system \ref{CHOICE_MODEL_system1} undergoes a Hopf bifurcation. In particular, there exists a critical delay value $\Delta_{\text{cr}}$ such that if $\Delta > \Delta_{\text{cr}}$, then the system is unstable an oscillates periodically. Computing this critical delay is an important part of understanding the dynamics of the queueing system. In this work, we will focus our attention on the choice model and consider how different choice models can impact the system dynamics. In particular, we will think of a choice model as being induced by a probability distribution in the sense that the probability of joining the $i^{\text{th}}$ queue can be written in the form \begin{eqnarray}
\frac {\bar{G}(q_i(t - \Delta))}{\sum_{j=1}^{N} \bar{G}(q_j(t - \Delta))}
\end{eqnarray}
where $\bar{G}$ is the complementary cumulative distribution function corresponding to the given probability distribution. As we noted above, the choice model in system \ref{CHOICE_MODEL_system1} is induced by an exponential distribution with parameter $\theta$. We will examine how the dynamics of the queueing system change when the choice model is induced by different probability distributions.

Suppose we are given some random variable $X : \Omega \to \mathbb{R}$ on the probability space $(\Omega, \mathcal{F}, \mathbb{P})$ and we assume that $X$ has a cumulative distribution function $G(x) = \mathbb{P}(X \leq x)$ and a corresponding probability density function $g$ exists. We define the complementary cumulative distribution function to be $\bar{G}(x) = 1 - G(x) = \mathbb{P}(X > x)$. We also define the so-called hazard function corresponding to this distribution to be $h(x) = \frac{g(x)}{\bar{G}(x)}$. We consider the DDE system 
\begin{eqnarray}
\overset{\bullet}{q}(t) &=& \lambda \frac{\bar{G}(q_i(t - \Delta))}{\sum_{j=1}^{N} \bar{G}(q_j(t - \Delta))} - \mu q_i(t), \hspace{5mm} i = 1, ..., N.
\end{eqnarray}

\noindent We are interested in better understanding how using complementary cumulative distribution functions of different probability distributions to build the choice model can impact the dynamics of the queueing system. We note that the system \ref{CHOICE_MODEL_dde_system} is a generalization of the system \ref{CHOICE_MODEL_system1}, the latter of which had its choice model induced by an exponential distribution with parameter $\theta > 0$. Understanding how the critical delay of the system changes as different choice models are used is very important to understanding how the system dynamics change as the critical delay tells us when the system is stable or unstable. In order to find the critical delay of the system \ref{CHOICE_MODEL_dde_system}, we will perform a linearization analysis for which an equilibrium solution is needed. Thus, we introduce an equilibrium solution with Theorem \ref{CHOICE_MODEL_equilibrium_theorem}. 

\begin{theorem}
The system \ref{CHOICE_MODEL_dde_system} has an equilibrium solution at $q_1 = \cdots = q_N = q^* $ where $$q^* = \frac{\lambda}{N \mu}.$$ 
\label{CHOICE_MODEL_equilibrium_theorem}
\end{theorem}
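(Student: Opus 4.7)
The plan is to verify the claim by direct substitution, exploiting the symmetry of the system. Since an equilibrium is a constant-in-time solution, if I posit $q_i(t) \equiv q^*$ for all $i$ and all $t$, then the delayed terms satisfy $q_i(t-\Delta) = q^*$ as well, and the left-hand side $\overset{\bullet}{q}_i(t)$ vanishes identically. The task reduces to checking that the right-hand side also vanishes for the proposed value $q^* = \lambda/(N\mu)$.

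First I would substitute $q_j(t-\Delta) = q^*$ into the choice probability. Because every term in the denominator of the ratio
\begin{eqnarray*}
\frac{\bar{G}(q_i(t-\Delta))}{\sum_{j=1}^{N} \bar{G}(q_j(t-\Delta))}
\end{eqnarray*}
equals $\bar{G}(q^*)$, the denominator collapses to $N\bar{G}(q^*)$, and the ratio simplifies to $1/N$ by cancellation. The equilibrium condition $0 = \lambda/N - \mu q^*$ then solves immediately to give $q^* = \lambda/(N\mu)$, matching the claimed value.

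The only subtle point, and the one that genuinely requires justification rather than arithmetic, is ensuring that the cancellation $\bar{G}(q^*)/[N\bar{G}(q^*)] = 1/N$ is legitimate, i.e., that $\bar{G}(q^*) \neq 0$. Since $\bar{G}$ is the complementary cumulative distribution function of a probability distribution, $\bar{G}(q^*) = \mathbb{P}(X > q^*)$, which is strictly positive provided the support of $X$ extends beyond $q^*$. I would note this as a standing assumption on the underlying distribution (all of the examples considered in the paper, such as exponential, hyperexponential, and other distributions with support on all of $[0,\infty)$ or $\mathbb{R}$, satisfy this automatically for $q^* = \lambda/(N\mu) > 0$). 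With that caveat made explicit, the proof is a one-line substitution, and no further linearization or fixed-point argument is needed at this stage.
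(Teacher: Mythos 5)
Your proposal is correct and follows essentially the same route as the paper's proof: substitute the constant symmetric ansatz, collapse the denominator to $N\bar{G}(q^*)$, and solve $0 = \lambda/N - \mu q^*$. Your added remark that the cancellation requires $\bar{G}\left(\frac{\lambda}{N\mu}\right) \neq 0$ is a point the paper leaves implicit, but it does not change the argument.
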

\begin{proof}
Letting $q_1 = \cdots = q_N = q^*$ for some constant $q^* \in \mathbb{R}$ to be determined, we get the system of $N$ identical equations in the form
\begin{align}
0 &= \lambda \frac{\bar{G}(q^*)}{\sum_{j=1}^{N} \bar{G}(q^*)} - \mu q^*\\
&= \lambda \frac{\bar{G}(q^*)}{N \bar{G}(q^*)} - \mu q^*\\
&= \frac{\lambda}{N} - \mu q^*
\end{align}
and so it follows that $q^* = \frac{\lambda}{N \mu}$ is a solution.
\end{proof}

A nice property of system \ref{CHOICE_MODEL_dde_system} is that the equilibrium solution from Theorem \ref{CHOICE_MODEL_equilibrium_theorem} always exists regardless of the probability distribution that is used to define the choice model. Now that we have an equilibrium solution, we can perform the linearization analysis used to get Theorem \ref{CHOICE_MODEL_delta_cr_theorem}. Before proceeding, we want to note that the hazard function $h(x)$ corresponding to the given probability distribution is an important quantity that will show up in our analysis and thus we make a brief comment on hazard functions. We can rewrite the hazard function corresponding to the distribution of a continuous random variable $X$ as follows
\begin{align}
    h(x) = \frac{g(x)}{\bar{G}(x)} = \frac{g(x)}{\mathbb{P}(X > x)} &= \lim_{\delta \to 0} \frac{G(x + \delta) - G(x)}{\delta} \cdot \frac{1}{\mathbb{P}(X > x)}\\
    &= \lim_{\delta \to 0} \frac{\mathbb{P}(x < X \leq x + \delta)}{\delta} \cdot \frac{1}{ \mathbb{P}(X > x)}\\
    &= \lim_{\delta \to 0} \frac{1}{\delta} \frac{\mathbb{P}( \{ X \leq x + \delta \} \cap \{ X > x  \} )}{\mathbb{P}(X > x)}\\
    &= \lim_{\delta \to 0} \frac{\mathbb{P}(X \leq x + \delta | X > x)}{\delta}.
\end{align}

\begin{theorem}
The system \ref{CHOICE_MODEL_dde_system} exhibits a change in stability at the critical value $\Delta_{\text{cr}}$ of the delay parameter $\Delta$ where 
\begin{eqnarray}
\Delta_{\text{cr}} &=& \frac{\text{arccos} \left( \frac{\mu}{C} \right)}{\sqrt{C^2 - \mu^2}} 
\end{eqnarray}
where
\begin{eqnarray}
C = -  \frac{\lambda}{N} \frac{g \left( \frac{\lambda}{N \mu}  \right)}{\bar{G} \left( \frac{\lambda}{N \mu}  \right)} = - \frac{\lambda}{N} h \left( \frac{\lambda}{N \mu}  \right)
\end{eqnarray}
and $\Delta_{\text{cr}}$ is valid when $C < - |\mu|$.

\label{CHOICE_MODEL_delta_cr_theorem}
\end{theorem}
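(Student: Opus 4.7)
The plan is to linearize the system \ref{CHOICE_MODEL_dde_system} about the symmetric equilibrium $q^*=\lambda/(N\mu)$ from Theorem \ref{CHOICE_MODEL_equilibrium_theorem}, derive the characteristic equation for the resulting linear DDE, and then locate pure imaginary roots to identify the Hopf bifurcation. Writing $u_i(t)=q_i(t)-q^*$, I would compute the partial derivatives of the ratio $R_i(\mathbf q)=\bar G(q_i)/\sum_j \bar G(q_j)$ via the quotient rule and evaluate them at $\mathbf q=q^*\mathbf 1$. Using $\bar G'=-g$ and the definition $h=g/\bar G$, the diagonal entry works out to $\partial R_i/\partial q_i\big|_{q^*}=-\tfrac{N-1}{N^2}h(q^*)$ while each off-diagonal entry is $\partial R_i/\partial q_k\big|_{q^*}=\tfrac{1}{N^2}h(q^*)$. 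Multiplying by $\lambda$ and subtracting the $\mu q_i$ term, the linearized system reads
\begin{equation*}
\dot u_i(t)=-\mu\,u_i(t)+\frac{\lambda h(q^*)}{N^{2}}\Bigl[-(N-1)u_i(t-\Delta)+\sum_{k\neq i}u_k(t-\Delta)\Bigr].
\end{equation*}

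Next I would exploit the permutation symmetry of the linear system. Looking for solutions $u_i(t)=v_i e^{\sigma t}$, the mode $v_i\equiv 1$ (the symmetric mode) gives $(\sigma+\mu)=0$, a harmless stable root. All remaining modes satisfy $\sum_i v_i=0$, which reduces the bracketed term to $-N v_i$ and yields the scalar characteristic equation
\begin{equation*}
\sigma+\mu=-\frac{\lambda h(q^*)}{N}\,e^{-\sigma\Delta}=C\,e^{-\sigma\Delta},
\end{equation*}
with $C=-\tfrac{\lambda}{N}h\!\left(\tfrac{\lambda}{N\mu}\right)$ exactly as in the statement. This step is the conceptual heart of the argument: isolating the non-symmetric eigenspace is what gives a single scalar transcendental equation and hides the dimension $N$ into the constant $C$.

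To locate the stability switch I would substitute $\sigma=i\omega$ with $\omega>0$ and separate real and imaginary parts:
\begin{equation*}
\mu=C\cos(\omega\Delta),\qquad \omega=-C\sin(\omega\Delta).
\end{equation*}
Squaring and adding kills the delay and gives $\omega=\sqrt{C^{2}-\mu^{2}}$, which is real precisely when $C<-|\mu|$ (since $C<0$ whenever $g$ and $\bar G$ are positive). The real-part equation then forces $\omega\Delta=\arccos(\mu/C)$, and solving for $\Delta$ yields the stated $\Delta_{\mathrm{cr}}=\arccos(\mu/C)/\sqrt{C^{2}-\mu^{2}}$.

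The main obstacle is bookkeeping rather than analytic depth: correctly computing the Jacobian of $R_i$ at the symmetric equilibrium (the diagonal and off-diagonal entries have opposite signs and differ by a factor of $N-1$) and then cleanly projecting onto the zero-sum eigenspace to collapse the $N$-dimensional characteristic system to one scalar equation. A minor subtlety worth flagging in the write-up is the sign convention that makes $C$ negative and hence guarantees $C<-|\mu|$ is the relevant regime; this is what ensures $\omega$ is real and the $\arccos$ argument lies in $[-1,0)$, so that $\Delta_{\mathrm{cr}}>0$.
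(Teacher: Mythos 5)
Your proposal is correct and follows essentially the same route as the paper: linearize at $q^*=\lambda/(N\mu)$, split off the symmetric (all-ones) mode from the zero-sum modes --- which is exactly the paper's diagonalization of the rank-structured Jacobian with eigenvalues $0$ and $C$ --- and then locate pure imaginary roots of $\sigma+\mu=Ce^{-\sigma\Delta}$ to obtain $\omega=\sqrt{C^{2}-\mu^{2}}$ and $\Delta_{\mathrm{cr}}=\arccos(\mu/C)/\omega$. The only cosmetic difference is that the paper solves for $\Delta$ via the complex logarithm before reducing to $\arccos$, whereas you read it off directly from the real-part equation; the Jacobian entries and the condition $C<-|\mu|$ match the paper exactly.
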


\begin{proof}

We begin by linearizing the system \ref{CHOICE_MODEL_dde_system} about the equilibrium point $q_1 = \cdots = q_N = q^*$ where $q^* = \frac{\lambda}{N \mu}$ by introducing the variables $$q_i(t) = q^* + u_i(t), \hspace{5mm} i = 1, ..., N.$$ Letting $u(t) = (u_1(t), ..., u_N(t))^T$, we obtain the linearized system
\begin{eqnarray}
\overset{\bullet}{u}(t) &=& A u(t - \Delta) - \mu u(t)
\end{eqnarray}
where $$A = C \begin{bmatrix}
\frac{N-1}{N} & - \frac{1}{N}  & - \frac{1}{N}  \dots  - \frac{1}{N}\\
- \frac{1}{N} & \frac{N-1}{N} & - \frac{1}{N} \dots  - \frac{1}{N}\\
- \frac{1}{N} & - \frac{1}{N} & \frac{N-1}{N} \dots - \frac{1}{N}\\
\vdots & & \ddots   \vdots\\
- \frac{1}{N} & - \frac{1}{N} & - \frac{1}{N} \dots \frac{N-1}{N}
\end{bmatrix}$$
where $$C = -  \frac{\lambda}{N} \frac{g \left( \frac{\lambda}{N \mu}  \right)}{\bar{G} \left( \frac{\lambda}{N \mu}  \right)} = - \frac{\lambda}{N} h \left( \frac{\lambda}{N \mu}  \right).$$ The matrix $A$ has eigenvalues $0$ with multiplicity $1$ and $C$ with multiplicity $N - 1$ and we can diagonalize the system according to the change of variables $u(t) = Ev(t)$ where $v(t) = (v_1(t), ..., v_N(t))^T$ and $E$ is a matrix whose columns are eigenvectors of $A$ so that $AE = DE$ where $D = \text{diag}(0, C, ..., C)$ is a diagonal matrix with the eigenvalues of $A$ on its diagonal. Applying this change of variables yields
\begin{align}
\overset{\bullet}{u}(t) &= A u(t - \Delta) - \mu u(t)\\
&\iff \nonumber \\
E \overset{\bullet}{v}(t) &= A E v(t - \Delta) - \mu Ev(t)\\
&\iff \nonumber \\
E\overset{\bullet}{v}(t) &= E D v(t - \Delta) - \mu Ev(t)\\
&\iff \nonumber \\
\overset{\bullet}{v}(t) &= D v(t - \Delta) - \mu v(t)
\end{align}
which can be written as 
\begin{align}
\overset{\bullet}{v}_1(t) &= -\mu v_1(t) \label{CHOICE_MODEL_stable_eqn}\\
\overset{\bullet}{v}_2(t) &= C v_2(t - \Delta) - \mu v_2(t)\\
&\vdots\\
\overset{\bullet}{v}_N(t) &= C v_N(t - \Delta) - \mu v_N(t)
\end{align}

The first of these equations, Equation \ref{CHOICE_MODEL_stable_eqn}, is an ordinary differential equation with a solution that decays with time since $\mu > 0$. We thus turn our attention to the remaining $N - 1$ equations with are all DDEs in the same form. By assuming a solution in the form $v_j(t) = e^{rt}$ for $j \in {2, ..., N}$ and $r \in \mathbb{C}$, we get the following characteristic equation.
\begin{eqnarray}
r - Ce^{-r \Delta} + \mu &=& 0 \label{CHOICE_MODEL_characteristic_eqn}
\end{eqnarray}
Stability changes when the real part of $r$ changes signs and thus we consider when $r$ is on the imaginary axis so that $r = i \omega$ for some $\omega \in \mathbb{R}$. Separating into real and imaginary parts yields
\begin{align}
\mu &= C \cos(\omega \Delta)\\
\omega &= - C \sin(\omega \Delta)
\end{align}
so that $$\mu^2 + \omega^2 = C^2$$ and we take $$\omega = \sqrt{C^2 - \mu^2}.$$
Looking back at the characteristic equation \ref{CHOICE_MODEL_characteristic_eqn}, we have that 
\begin{align}
r - Ce^{-r \Delta} + \mu &= 0\\
&\iff \nonumber \\
\frac{C}{\mu + i \omega} &= e^{i \omega \Delta}\\
&\iff \nonumber\\
\frac{C (\mu - i \omega)}{\mu^2 + \omega^2} &= e^{i \omega \Delta}\\
&\iff \nonumber \\
\frac{\mu}{C} - i \frac{\omega}{C} &= e^{i \omega \Delta}\\
&\iff \nonumber \\
\log \left( \frac{\mu}{C} - i \frac{\omega}{C} \right) &= i \omega \Delta\\
&\iff \nonumber \\
\ln \left(\frac{\mu^2 + \omega^2}{C^2} \right) + i \text{arg} \left(  \frac{\mu}{C} - i \frac{\omega}{C}  \right) &= i \omega \Delta\\
&\iff \nonumber \\
\Delta &= \frac{1}{\omega} \text{arg} \left(  \frac{\mu}{C} - i \frac{\omega}{C}  \right)
\end{align}
where we note that $ \frac{\mu}{C} - i \frac{\omega}{C}$ is of unit modulus and thus we can alternatively write
\begin{eqnarray}
\Delta_{\text{cr}} = \frac{1}{\omega} \text{arccos} \left( \frac{\mu}{C} \right)
\end{eqnarray} 
provided that $\text{arccos} \left( \frac{\mu}{C} \right) \in \mathbb{R}$ because we require that $\text{arg} \left(  \frac{\mu}{C} - i \frac{\omega}{C}  \right) \in \mathbb{R}$ by definition. This requirement forces $\left| \frac{\mu}{C} \right| \leq 1$ or equivalently that $C^2 \geq \mu^2$. Additionally, we require that $\omega \in \mathbb{R}$ so that this inequality becomes strict so that $C^2 > \mu^2$. One can show that the system is always unstable when $C > |\mu|$ and we require that $C < -|\mu|$. These conditions ensure that a real value of $\Delta_{\text{cr}}$ exists. To account for multivaluedness, we take the smallest value of $\Delta_{\text{cr}} > 0$.

For completeness, we can write the critical delay as 
\begin{eqnarray}
\Delta_{\text{cr}} = \frac{\text{arccos} \left( \frac{\mu}{C} \right)}{\sqrt{C^2 - \mu^2}} = \frac{ \text{arccos} \left( - \frac{N \mu \bar{G} \left( \frac{\lambda}{N \mu}  \right) }{ \lambda g \left( \frac{\lambda}{N \mu}  \right) }  \right) }{ \sqrt{  \left( \frac{ \lambda g \left( \frac{\lambda}{N \mu}  \right) }{N  \bar{G} \left( \frac{\lambda}{N \mu}  \right) }   \right)^2 - \mu^2} }.
\end{eqnarray}

\end{proof}

As seen in the proof of Theorem \ref{CHOICE_MODEL_delta_cr_theorem}, we can express the critical delay in terms of the repeated eigenvalue $C$ of the linearized system. In particular, we see that $C$ directly depends on the hazard function of the given probability distribution. Using this information, we can explore how using different probability distributions to create the choice model can give the system different stability properties. 


\section{Dynamics for Different Distributions}
\label{CHOICE_MODEL_eigenvalue_section}

In this section, we numerically examine the dynamics of the queueing system \ref{CHOICE_MODEL_dde_system} for complementary cumulative distribution functions $\bar{G}$ corresponding to various different probability distributions. In analyzing the dynamics of the queueing system, it is crucial to understand the critical delay as this provides important information about the qualitative behavior of the system. As seen in Section \ref{CHOICE_MODEL_hopf_section}, the critical delay of the queueing system depends directly on the eigenvalue $C$ of the linearized system according to the relation 
\begin{eqnarray}
\Delta_{\text{cr}} = \frac{\text{arccos} \left( \frac{\mu}{C} \right)}{\sqrt{C^2 - \mu^2}}
\end{eqnarray}
where the eigenvalue $C$ is directly related to the hazard rate of the chosen probability distribution by 
\begin{eqnarray}
C = - \frac{\lambda}{N} h \left( \frac{\lambda}{N \mu} \right).
\end{eqnarray}
Therefore, this eigenvalue is a useful quantity to examine to help understand the qualitative system dynamics. In the remainder of this section, we consider when the given probability distribution is exponential, normal, log-normal, Weibull, gamma, or of phase-type. For each of these distributions, we will provide useful quantities including the probability density function, denoted $g$, the complementary cumulative distribution function, denoted $\bar{G}$, the mean, the variance, and the eigenvalue $C$. Additionally, we will numerically examine how the critical delay depends on the mean and variance of the given distribution.

\subsection{Exponential Distribution}

In this section, we assume that the complementary cumulative distribution function $\bar{G}$ that characterizes the choice model is given by an exponential distribution. Below we provide some useful quantities relating to the exponential distribution.

\begin{align}
X &\sim \text{Exp}(\theta), \hspace{5mm} \theta > 0\\
g(x) &= \theta e^{- \theta x}, \hspace{5mm} x \geq 0\\
\bar{G}(x) &= e^{- \theta x}\\
\mathbb{E}[X] &= \frac{1}{\theta}\\
\text{Var}(X) &= \frac{1}{\theta^2}\\
C &= - \frac{\lambda \theta}{N}
\end{align}

The exponential distribution is a well-known nonnegative continuous probability distribution. The DDE system \ref{CHOICE_MODEL_dde_system} corresponding to an exponential distribution is equivalent to a DDE queueing system with a multinomial logit choice model, which has already been extensively studied in the existing literature. Such a choice model uses $-\theta q_i(t-\Delta)$ for the deterministic part of the utility of joining the $i^{\text{th}}$ queue so that the utility depends linearly on the delayed queue length. We note that the exponential distribution is determined by a single parameter $\theta > 0$ as we see that its mean and variance are both fully determined by this parameter. Additionally, the exponential distribution has a constant hazard function given by $h(x) = g(x)/\bar{G}(x) = \theta$.


The exponential complementary cumulative distribution function can be viewed as the moment generating function of a random variable $Y$ that takes on the value $- \theta$ with probability $1$ and thus, in accordance with Equation \ref{CHOICE_MODEL_inverse_laplace_eqn}, we have
\begin{eqnarray}
f_{-Y}(x) = \mathcal{L}^{-1}[e^{-\theta x}] = \delta(x - \theta)
\end{eqnarray}
where $\delta(x)$ denotes a Dirac delta centered at the origin. Obviously, this is a trivial case where we have complete certainty of the parameter of our exponential distribution.

In Figure \ref{CHOICE_MODEL_fig_exponential} we show queue length plots and phase diagrams on each side of the critical delay. In particular, we observe that the queue lengths approach an equilibrium when the delay is below the critical delay and the queue lengths oscillate with some limiting amplitude when the delay is larger than the critical delay. It is worth noting that the mean and variance of the exponential distribution are determined by a single parameter and thus the exponential distribution has a fixed variance for a fixed mean. In Figure \ref{CHOICE_MODEL_fig_exponential_mean} we see how the value of the critical delay varies as the mean of the distribution is varied. We see that there is a critical value of the mean that, when approached, can make the critical delay arbitrarily large. This is useful information because instability can lead to inefficiencies in the queueing system as the resulting queue length oscillations can cause some servers to be overworked and others to be underworked. Thus, being able to make the critical delay large would be beneficial for the overall productivity of the queueing system as large delays could still result in a stable system.

\begin{figure}
\begin{tabular}{cc}
  \includegraphics[scale=.55]{./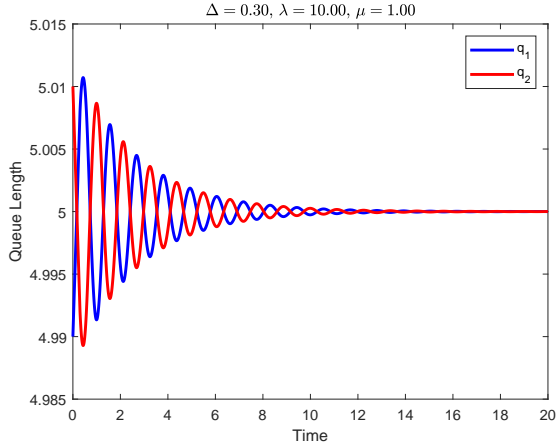} &   \includegraphics[scale=.55]{./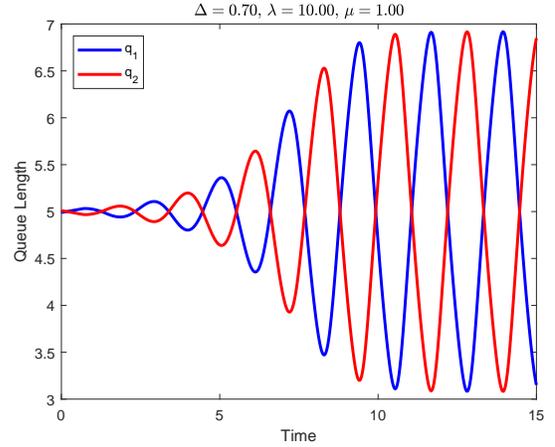} \\
(a)  & (b) \\[6pt]
 \includegraphics[scale=.55]{./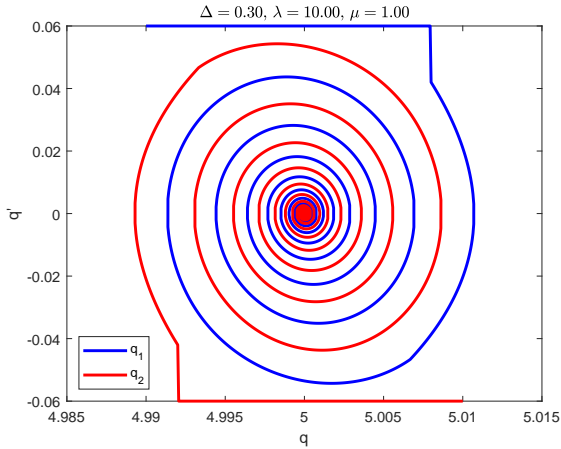} &   \includegraphics[scale=.55]{./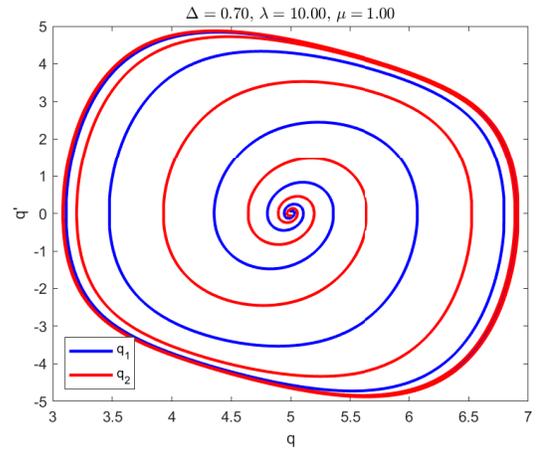} \\
(c)  & (d)  \\[6pt]
\end{tabular}
\caption{Before and after the change in stability using the choice model induced by an \textbf{exponential distribution} with $\theta = 1$ with constant history function on $[-\Delta, 0]$ with $q_1 = 4.99$ and $q_2 = 5.01$, $N = 2, \lambda = 10$, $\mu = 1$. The top two plots are queue length versus time with $\Delta = .3$ (a) and $\Delta = .7$ (b). The bottom two plots are phase plots of the queue length derivative with respect to time against queue length for $\Delta = .3$ (c) and $\Delta = .7$ (d). The critical delay is $\Delta_{\text{cr}} = .3617$.}
\label{CHOICE_MODEL_fig_exponential}
\end{figure}



\begin{figure}
\hspace{-10mm} \includegraphics[scale=.6]{./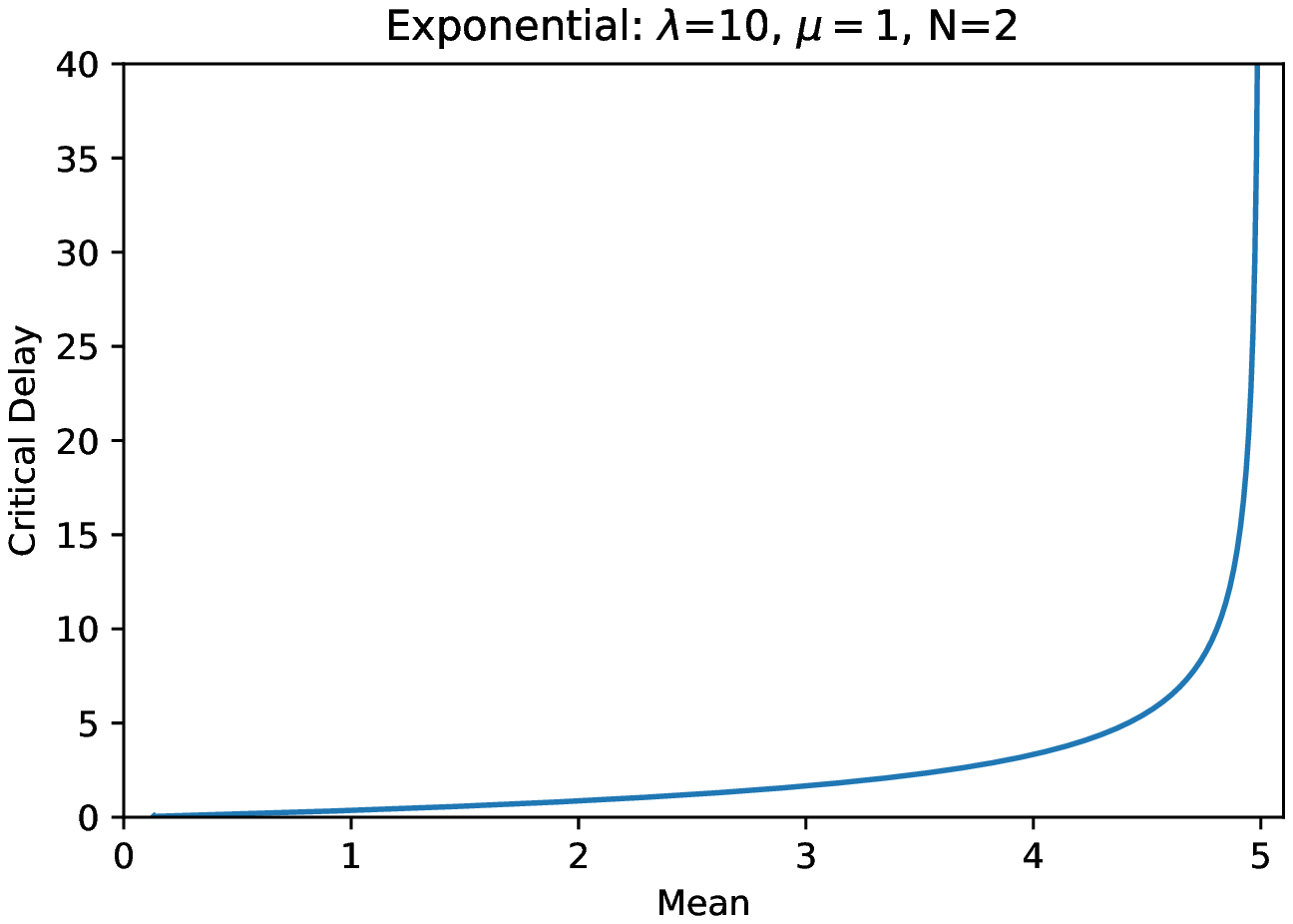} \includegraphics[scale=.6]{./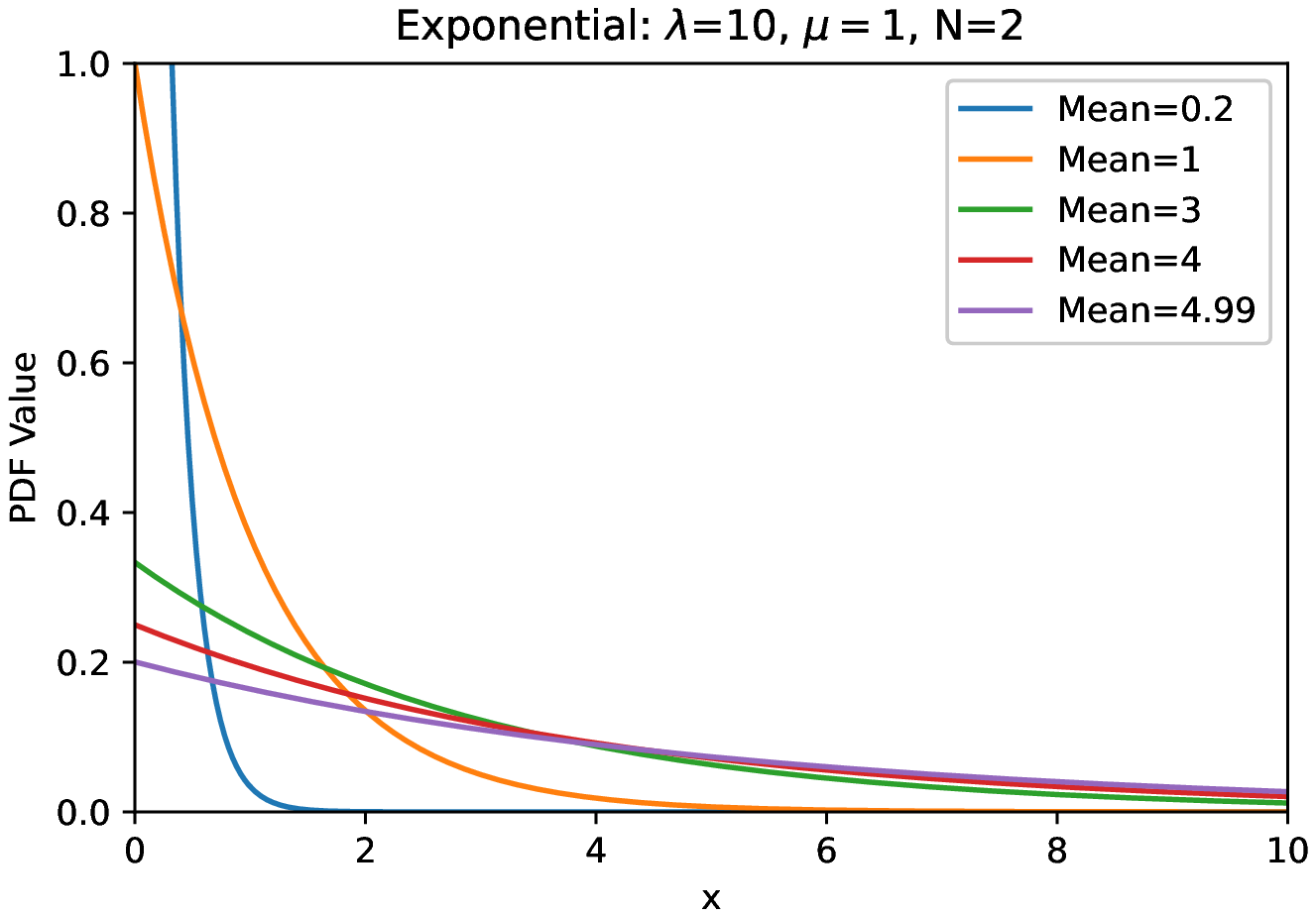}
\caption{Left: The critical delay plotted against the mean of the \textbf{exponential distribution} that induces the choice model. Right: A plot of the probability density function used for some selected values of the mean.}
\label{CHOICE_MODEL_fig_exponential_mean}
\end{figure}


\subsection{Normal Distribution}

In this section, we assume that the complementary cumulative distribution function $\bar{G}$ that characterizes the choice model is given by a normal distribution. Below we provide some useful quantities relating to the normal distribution.

\begin{align}
X &\sim \text{Normal}(\alpha, \sigma^2)\\
g(x) &= \frac{1}{\sigma \sqrt{2 \pi}} \exp \left( - \frac{1}{2} \left( \frac{x - \alpha}{\sigma}  \right)^2  \right)\\
\bar{G}(x) &= \frac{1}{2} \left[ 1 - \text{erf} \left( \frac{x - \alpha}{\sigma \sqrt{2}} \right)  \right]\\
\mathbb{E}[X] &= \alpha\\
\text{Var}(X) &= \sigma^2\\
C &= - \sqrt{\frac{2}{\pi}}\frac{\lambda}{N \sigma} \frac{\exp \left( - \frac{1}{2} \left( \frac{ \frac{\lambda}{N \mu} - \alpha}{\sigma}  \right)^2  \right)}{\left[ 1 - \text{erf} \left( \frac{ \frac{\lambda}{N \mu} - \alpha}{\sigma \sqrt{2}} \right)  \right] }
\end{align}

An important difference between the normal distribution and the exponential distribution is that the normal distribution is characterized by two parameters instead of just one parameter, its mean parameter $\alpha$ and standard deviation parameter $\sigma$. This is interesting to consider because we can vary the variance of the normal distribution for a fixed mean and vice versa and thus we have the freedom to adjust the value of the critical delay even if one of these parameters remains fixed.

The normal distribution has an interesting connection to hazard functions. In particular, if one considers the mean of a so-called truncated normal distribution, one can show that for $X \sim \text{Normal}(\alpha, \sigma^2)$
\begin{align}
    \mathbb{E}[X | X < a] &= \alpha - \sigma h \left( \frac{a - \alpha}{\sigma} \right)\\
    \mathbb{E}[X | X > a] &= \alpha + \sigma h \left( \frac{a - \alpha}{\sigma} \right)
\end{align}
\noindent where $h(x)$ is the hazard function of the standard normal distribution. This is true largely because the probability density function $\phi$ of the standard normal distribution has the nice property that $\frac{d}{dx} \phi(x) = -x \phi(x)$.

In Figure \ref{CHOICE_MODEL_fig_normal} we show queue length plots and phase diagrams on each side of the critical delay. In Figure \ref{CHOICE_MODEL_fig_normal_mean} we see how the value of the critical delay varies as the mean is varied with a fixed variance and in Figure \ref{CHOICE_MODEL_fig_normal_variance} we see how it changes as the variance is varied with a fixed mean. We see that in the cases considered, there is a critical value at which the critical delay can be made arbitrarily large when approached. 

\begin{figure}
\begin{tabular}{cc}
  \includegraphics[scale=.55]{./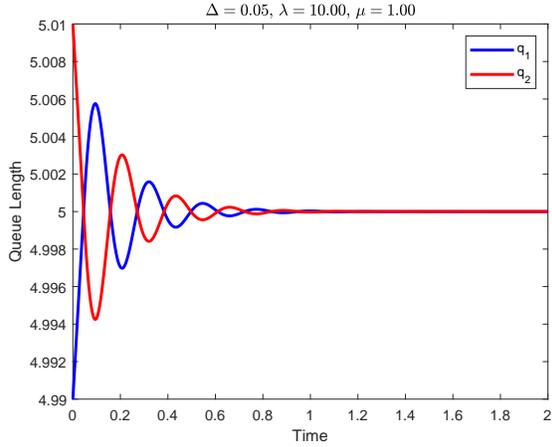} &   \includegraphics[scale=.55]{./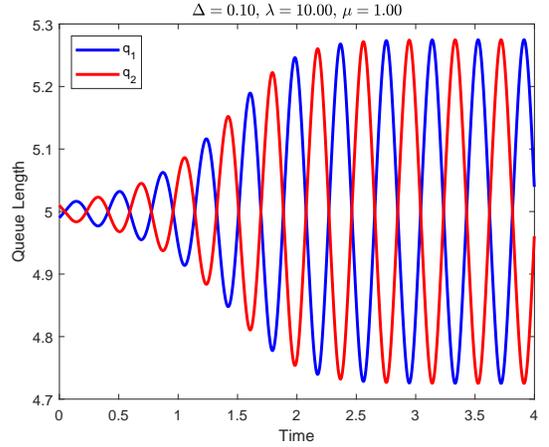} \\
(a)  & (b) \\[6pt]
 \includegraphics[scale=.55]{./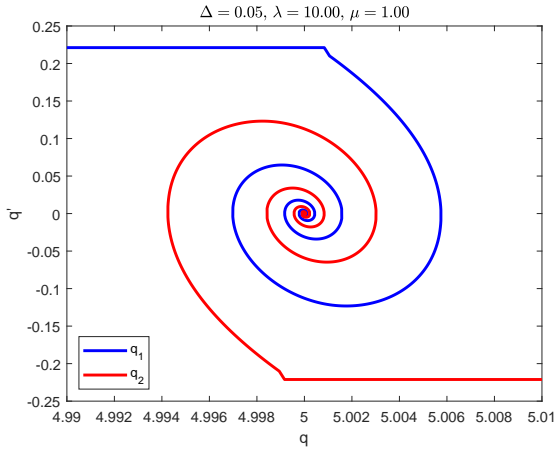} &   \includegraphics[scale=.55]{./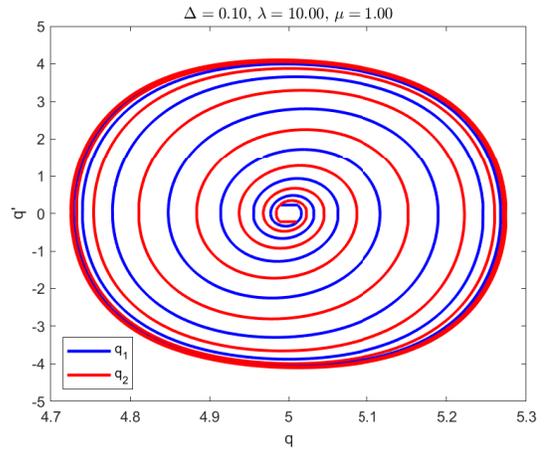} \\
(c)  & (d)  \\[6pt]
\end{tabular}
\caption{Before and after the change in stability using the choice model induced by a \textbf{normal distribution} with $\alpha = 1$ and $\sigma = 1$ with constant history function on $[-\Delta, 0]$ with $q_1 = 4.99$ and $q_2 = 5.01$, $N = 2, \lambda = 10$, $\mu = 1$. The top two plots are queue length versus time with $\Delta = .05$ (a) and $\Delta = .1$ (b). The bottom two plots are phase plots of the queue length derivative with respect to time against queue length for $\Delta = .05$ (c) and $\Delta = .1$ (d). The critical delay is $\Delta_{\text{cr}} = .0767$.}
\label{CHOICE_MODEL_fig_normal}
\end{figure}




\begin{figure}
\hspace{-10mm} \includegraphics[scale=.6]{./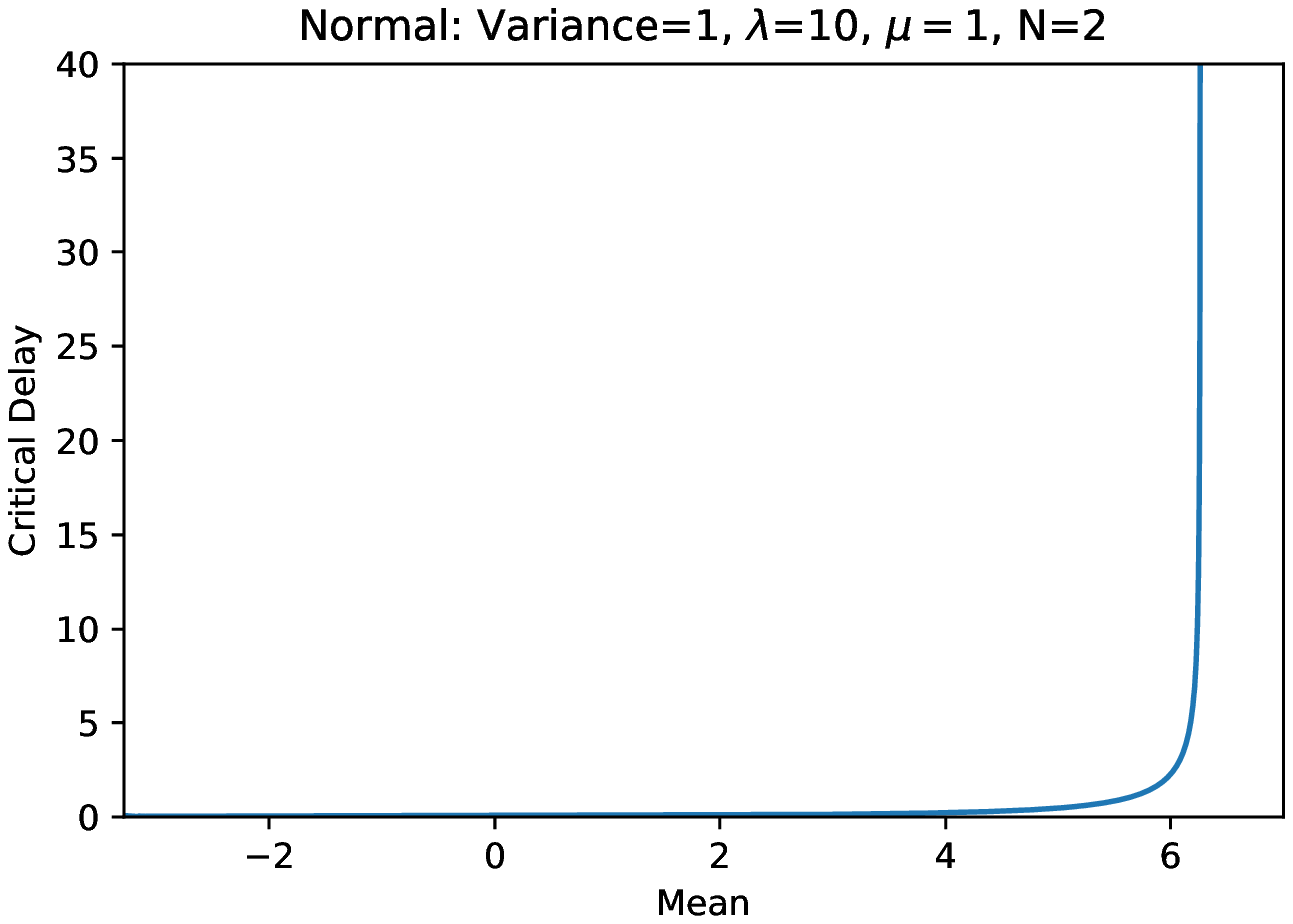} \includegraphics[scale=.6]{./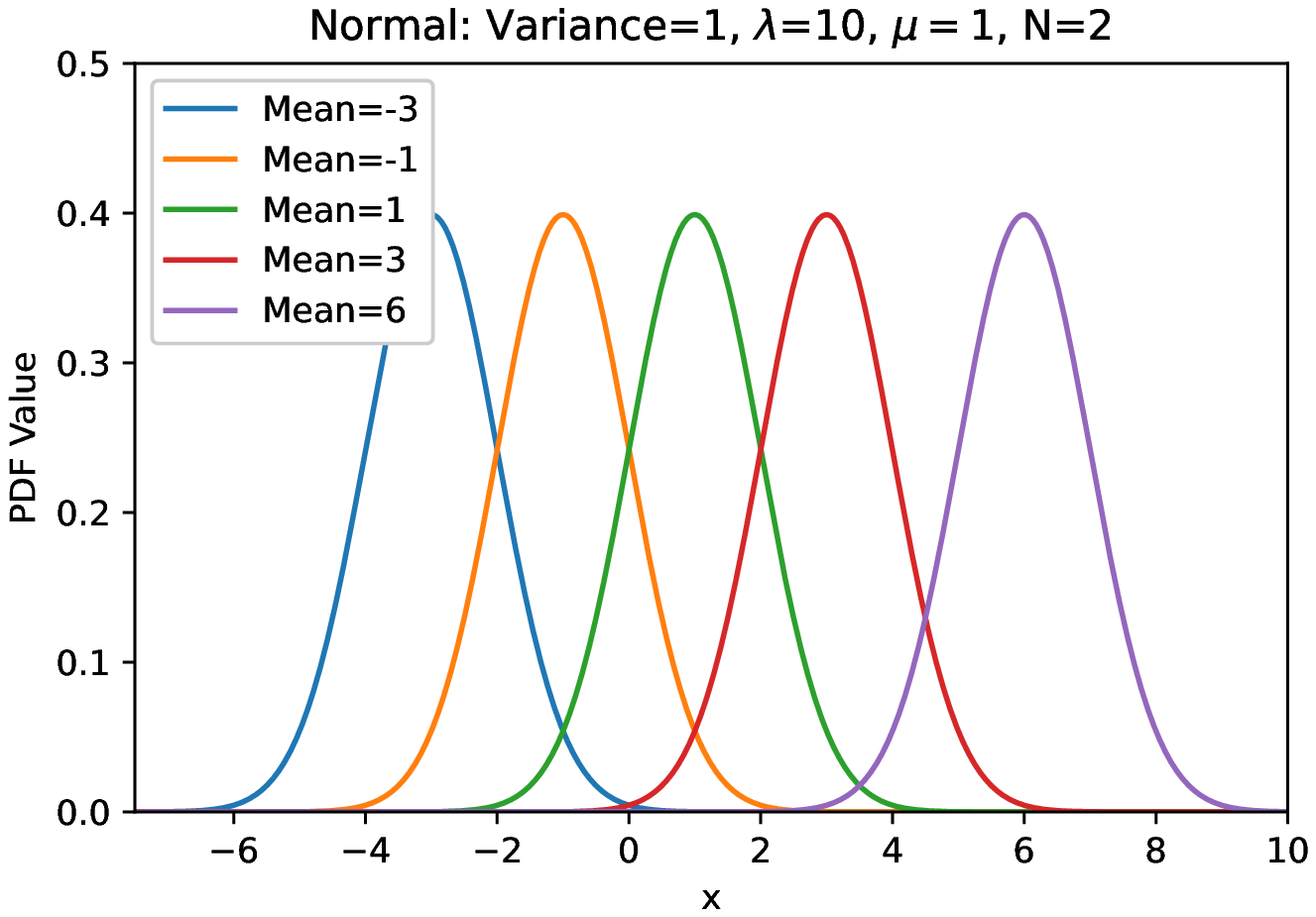}
\caption{Left: The critical delay plotted against the mean of the \textbf{normal distribution} that induces the choice model with a fixed variance of 1. Right: A plot of the probability density function used for some selected values of the mean.}
\label{CHOICE_MODEL_fig_normal_mean}
\end{figure}

\begin{figure}
\hspace{-10mm} \includegraphics[scale=.6]{./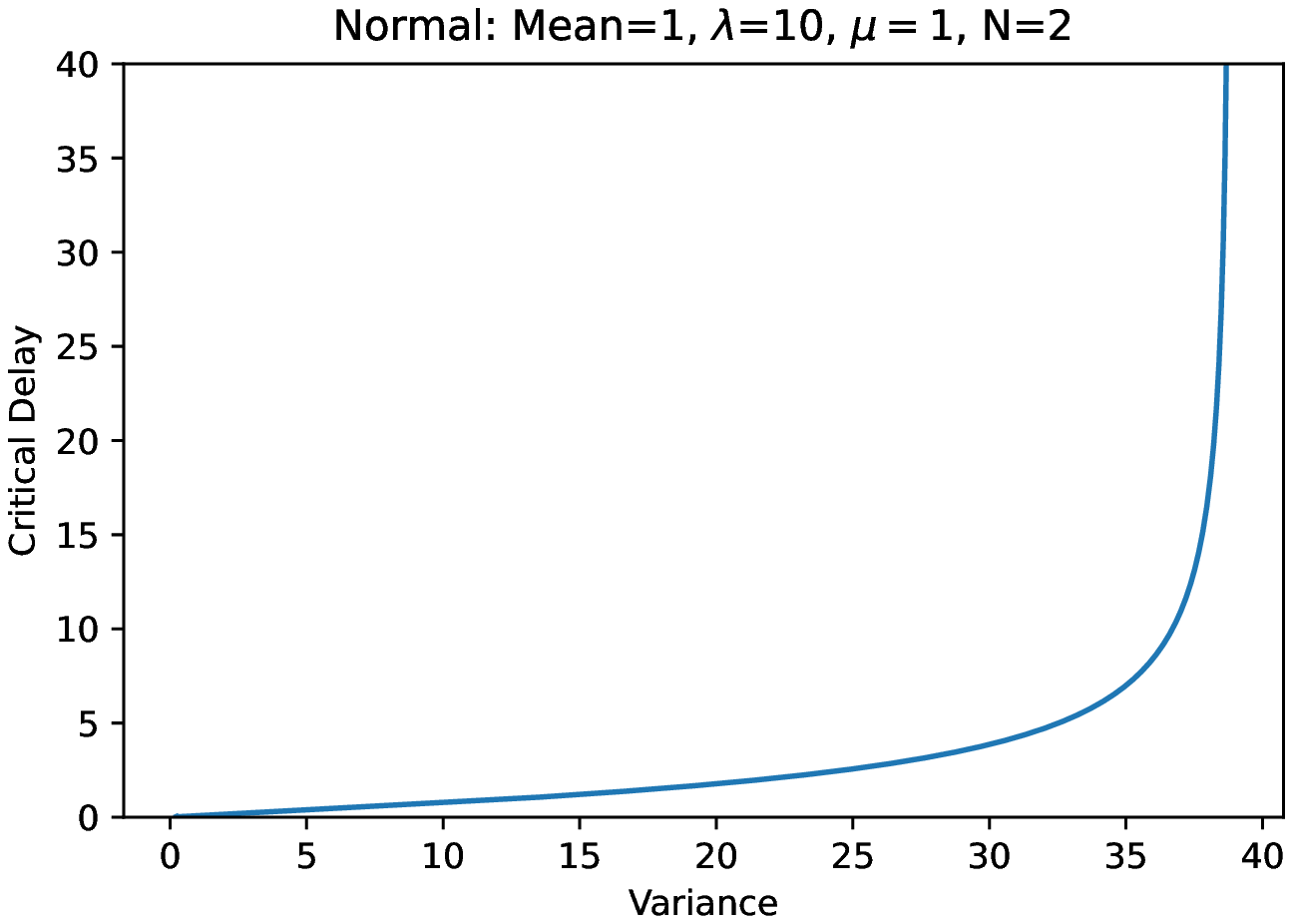} \includegraphics[scale=.6]{./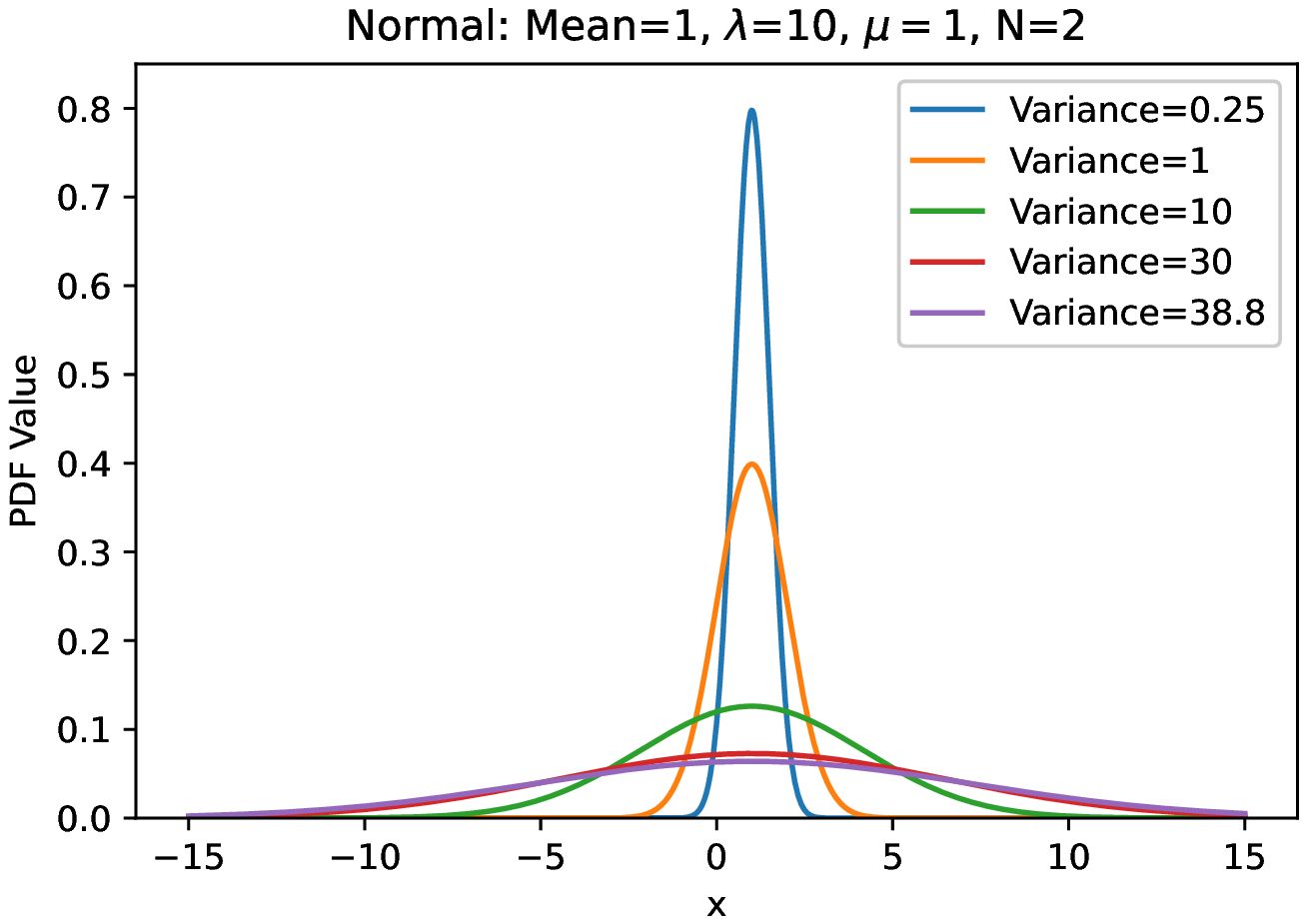}
\caption{Left: The critical delay plotted against the variance of the \textbf{normal distribution} that induces the choice model with a fixed mean of 1. Right: A plot of the probability density function used for some selected values of the variance.}
\label{CHOICE_MODEL_fig_normal_variance}
\end{figure}


\subsection{Log-Normal Distribution}

In this section, we assume that the complementary cumulative distribution function $\bar{G}$ that characterizes the choice model is given by a log-normal distribution. Below we provide some useful quantities relating to the log-normal distribution.

\begin{align}
X &\sim \text{Log-Normal}(\alpha, \sigma^2)\\
g(x) &= \frac{1}{x \sigma \sqrt{2 \pi}} \exp \left( - \frac{1}{2} \left( \frac{ \ln(x) - \alpha}{\sigma}  \right)^2  \right)\\
\bar{G}(x) &= \frac{1}{2} \left[ 1 - \text{erf} \left( \frac{\ln(x) - \alpha}{\sigma \sqrt{2}} \right)  \right]\\
\mathbb{E}[X] &= \exp \left( \alpha + \frac{\sigma^2}{2} \right)\\
\text{Var}(X) &= (\exp(\sigma^2) - 1) \exp(2\alpha + \sigma^2)\\
C &= - \sqrt{\frac{2}{\pi}}\frac{\alpha}{\sigma} \frac{\exp \left( - \frac{1}{2} \left( \frac{ \ln \left(\frac{\lambda}{N \mu} \right) - \alpha}{\sigma}  \right)^2  \right)}{\left[ 1 - \text{erf} \left( \frac{ \ln \left( \frac{\lambda}{N \mu} \right) - \alpha}{\sigma \sqrt{2}} \right)  \right] }
\end{align}

The log-normal distribution is a continuous probability distribution such that the natural logarithm applied to a log-normal random variable with parameters $\alpha$ and $\sigma$ results in a random variable that has a normal distribution with mean $\alpha$ and standard deviation $\sigma$. In Figure \ref{CHOICE_MODEL_fig_lognormal} we show queue length plots and phase diagrams on each side of the critical delay. In Figure \ref{CHOICE_MODEL_fig_lognormal_mean} we see how the value of the critical delay varies as the mean is varied with a fixed variance and in Figure \ref{CHOICE_MODEL_fig_lognormal_variance} we see how it changes as the variance is varied with a fixed mean. As the mean varies, there is a critical value of the mean such that the critical delay can be made arbitrarily large when approached. This is similar to the case shown for the normal distribution, but it differs in that we see that in the case of the log-normal distribution, the critical delay gets larger as the mean approaches zero as well. Additionally, the log-normal distribution appears to have a larger critical variance than the normal distribution does for unit mean, but the critical delay is larger in the log-normal case for small variance than it is in the normal case.

\begin{figure}
\begin{tabular}{cc}
  \includegraphics[scale=.55]{./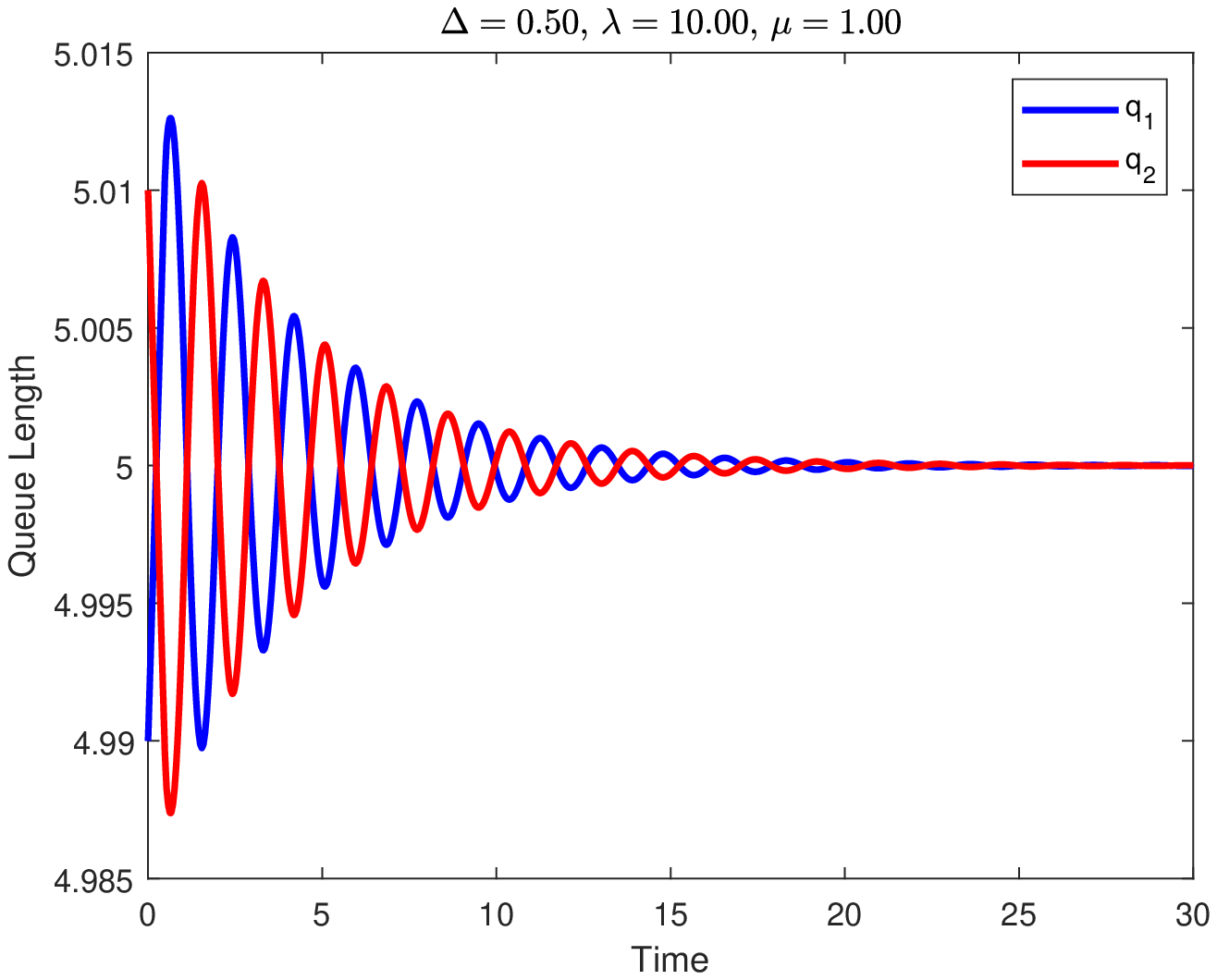} &   \includegraphics[scale=.55]{./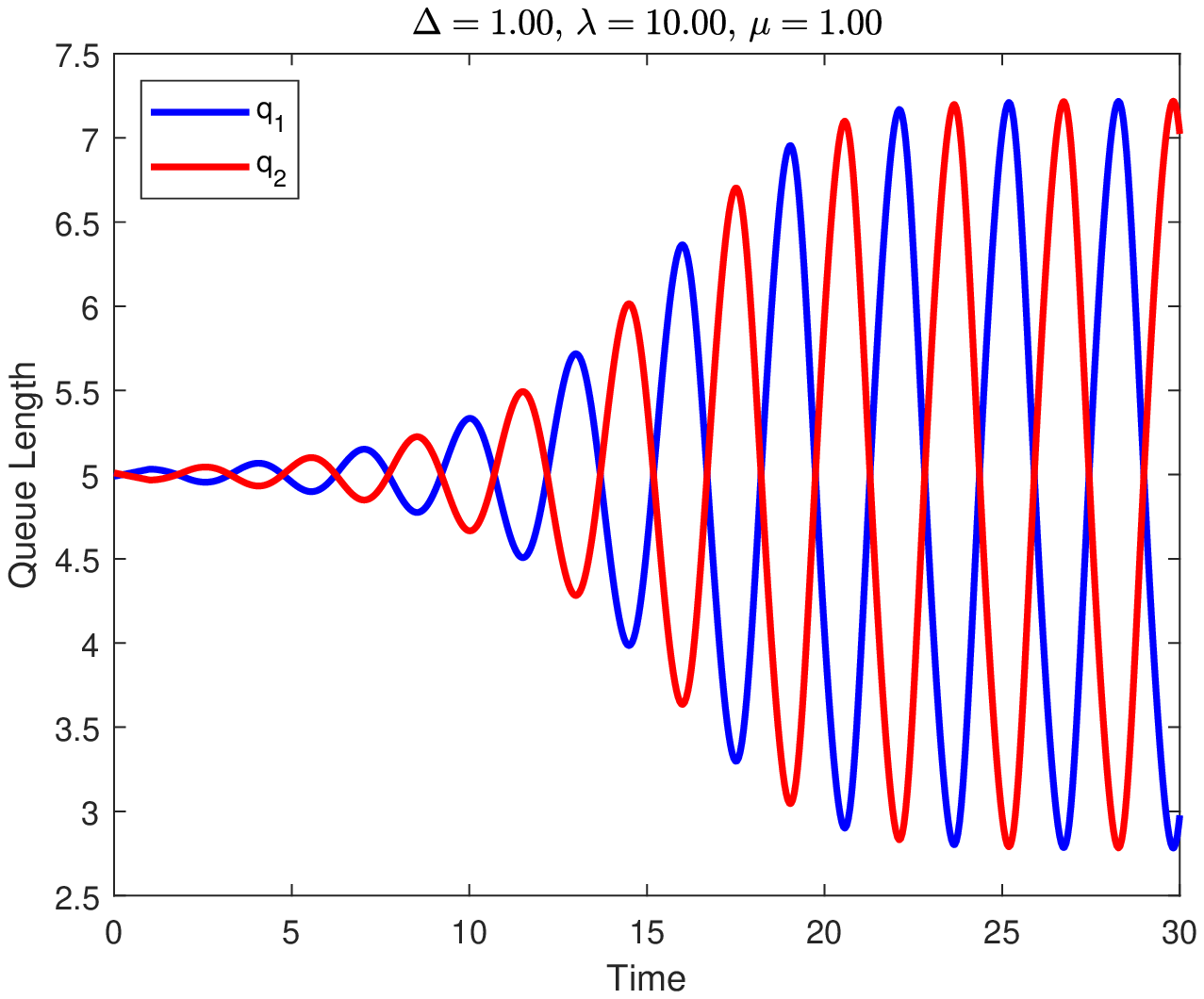} \\
(a)  & (b) \\[6pt]
 \includegraphics[scale=.55]{./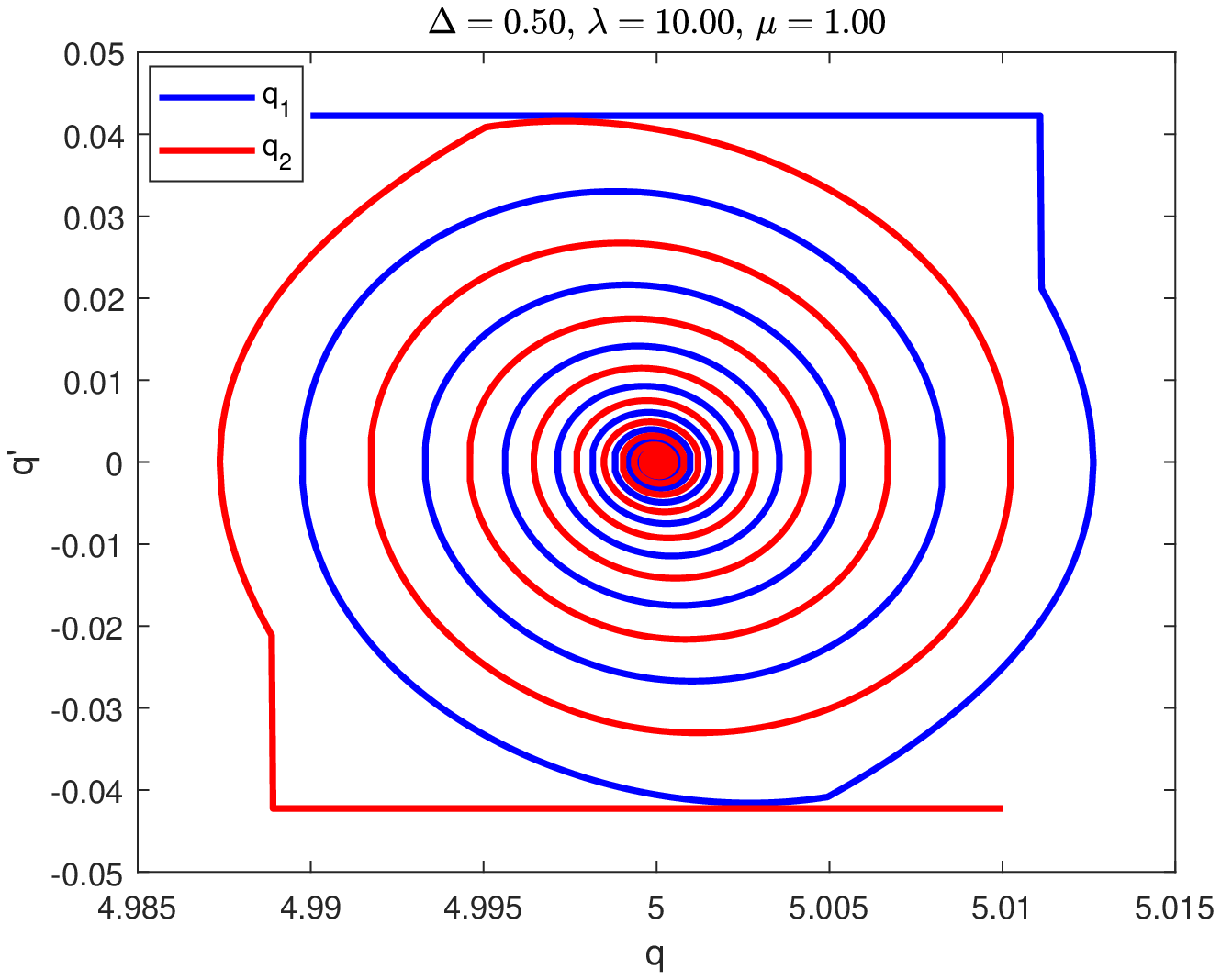} &   \includegraphics[scale=.55]{./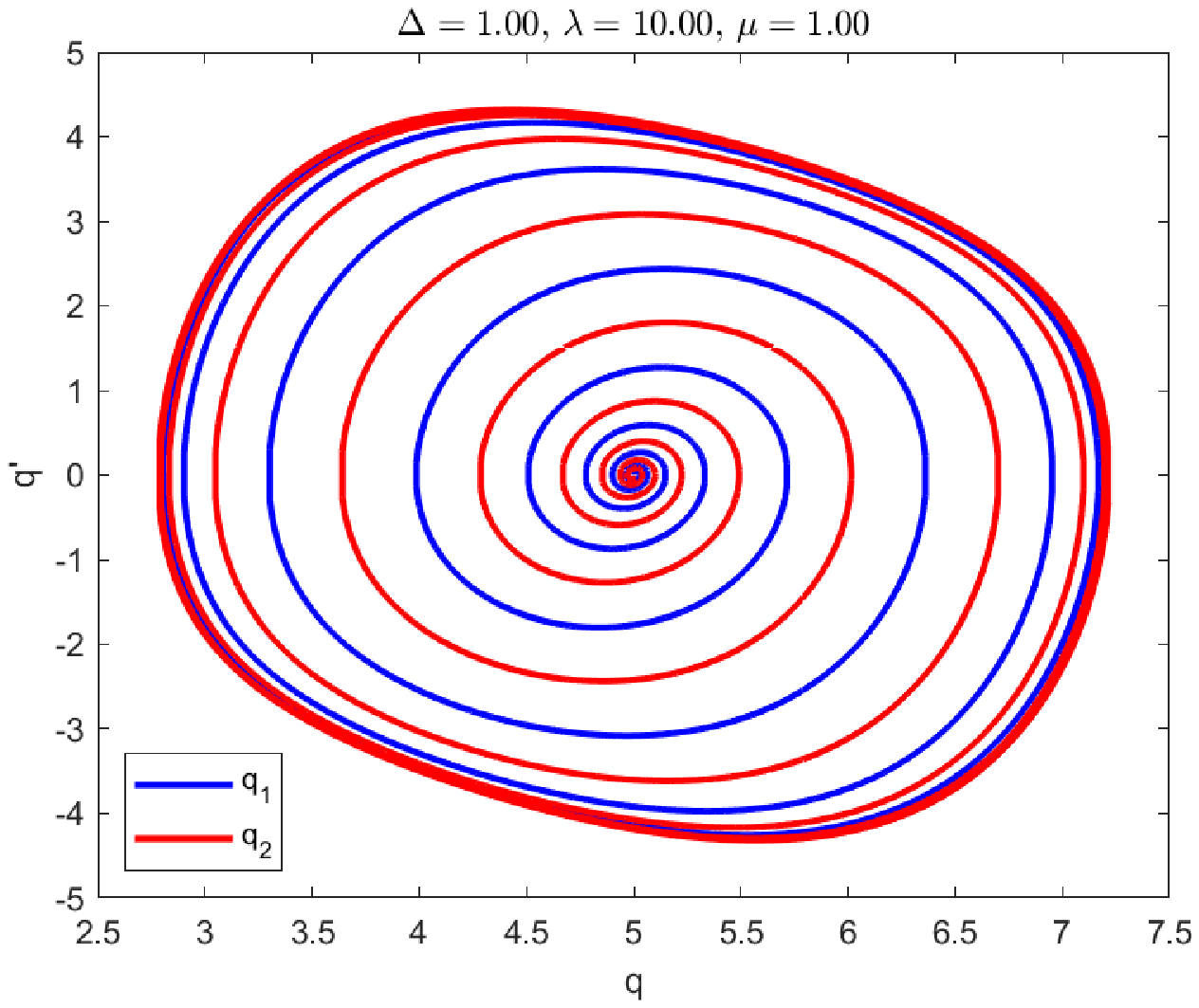} \\
(c)  & (d)  \\[6pt]
\end{tabular}
\caption{Before and after the change in stability using the choice model induced by a \textbf{log-normal distribution} with $\alpha = - \frac{1}{2} \log(2)$ and $ \sigma = \sqrt{\log(2)}$ (which results in mean 1 and variance 1) with constant history function on $[-\Delta, 0]$ with $q_1 = 4.99$ and $q_2 = 5.01$, $N = 2, \lambda = 10$, $\mu = 1$. The left two plots are queue length versus time with $\Delta = .5$ (Left) and $\Delta = 1$ (Right). The right two plots are phase plots of the queue length derivative with respect to time against queue length for $\Delta = .5$ (Left) and $\Delta = 1$ (Right). The critical delay is $\Delta_{\text{cr}} = .6148$.}
\label{CHOICE_MODEL_fig_lognormal}
\end{figure}

\begin{figure}
\hspace{-10mm} \includegraphics[scale=.6]{./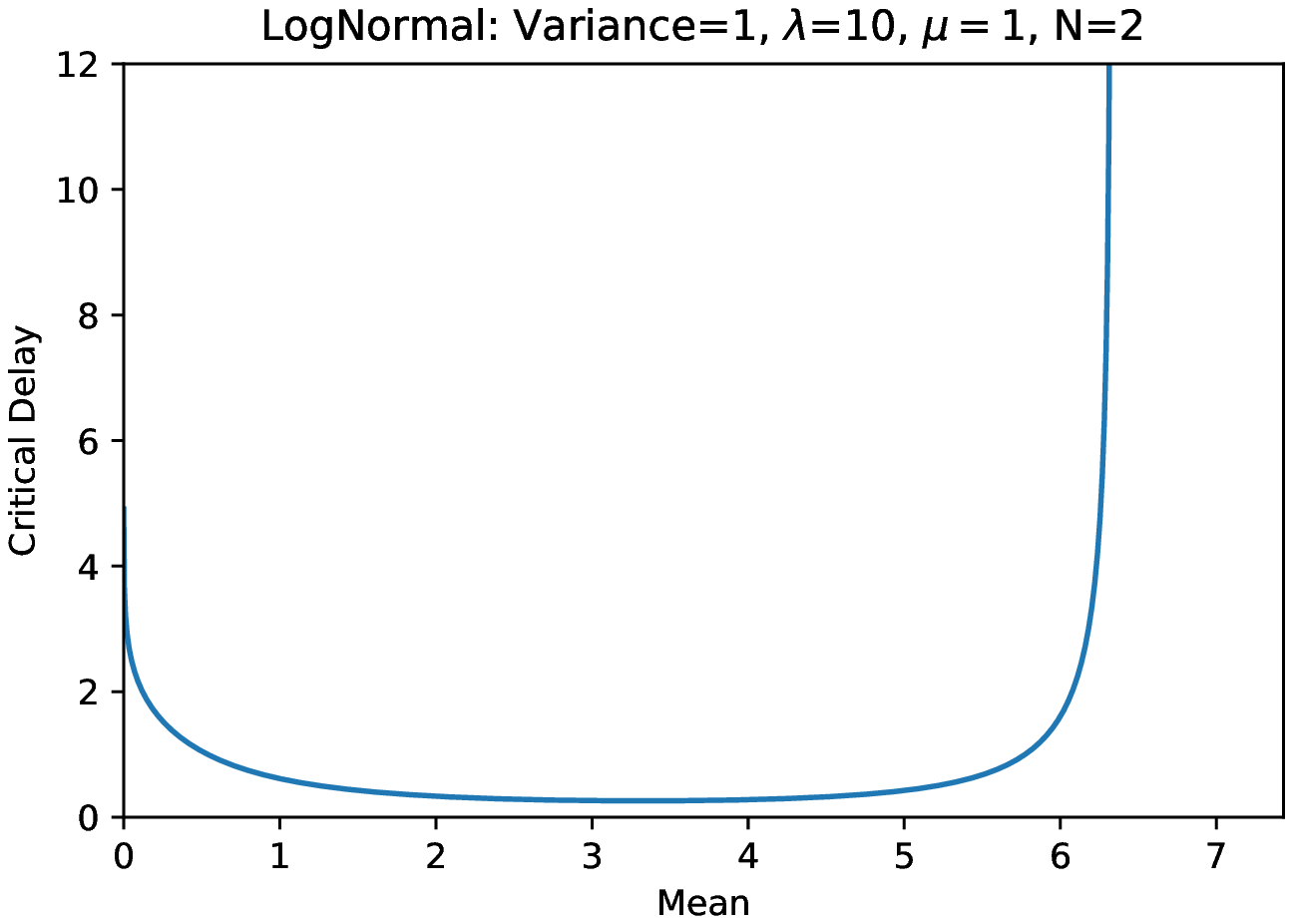} \includegraphics[scale=.6]{./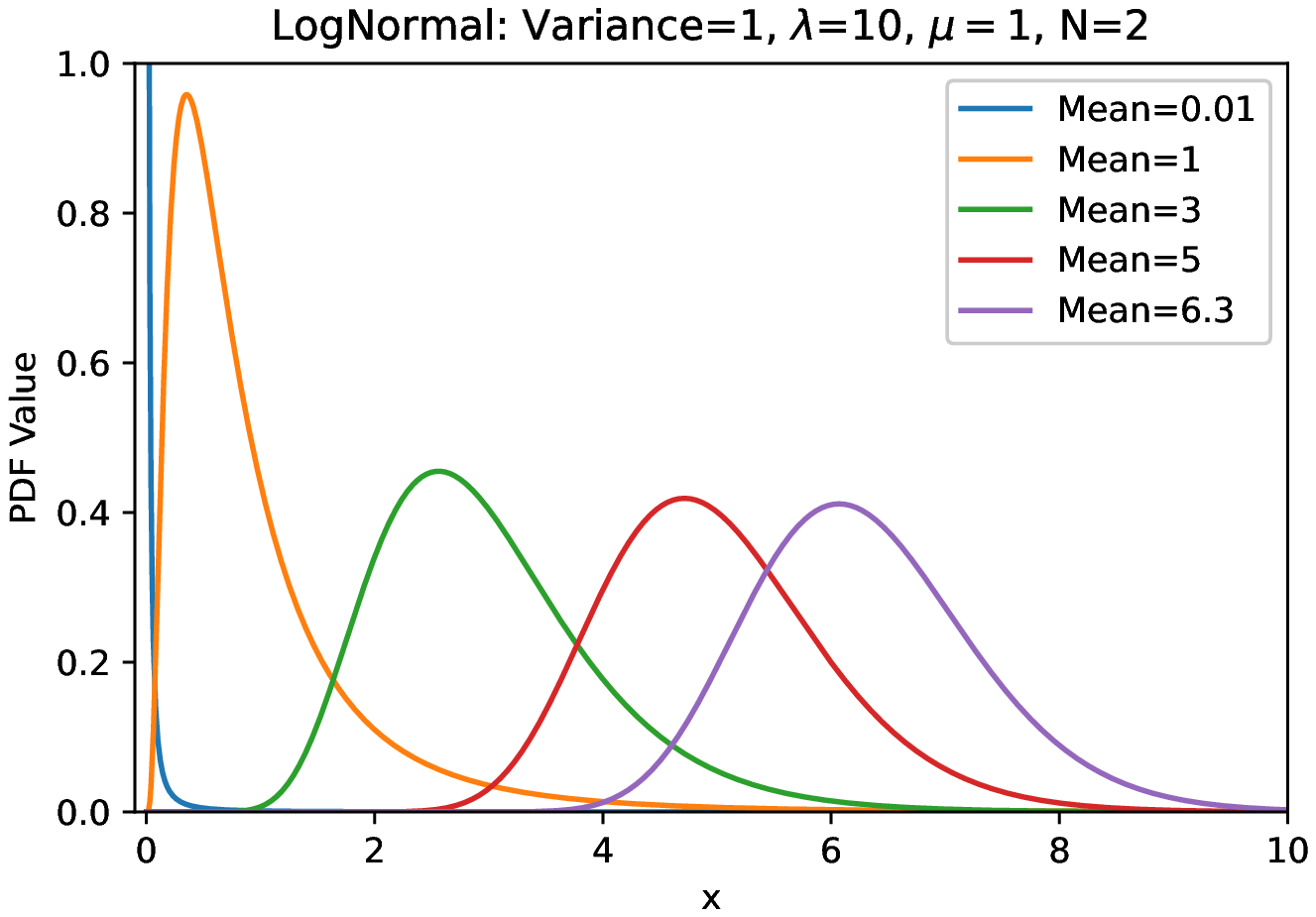}
\caption{Left: The critical delay plotted against the mean of the \textbf{log-normal distribution} that induces the choice model with a fixed variance of 1. Right: A plot of the probability density function used for some selected values of the mean.}
\label{CHOICE_MODEL_fig_lognormal_mean}
\end{figure}

\begin{figure}
\hspace{-10mm} \includegraphics[scale=.6]{./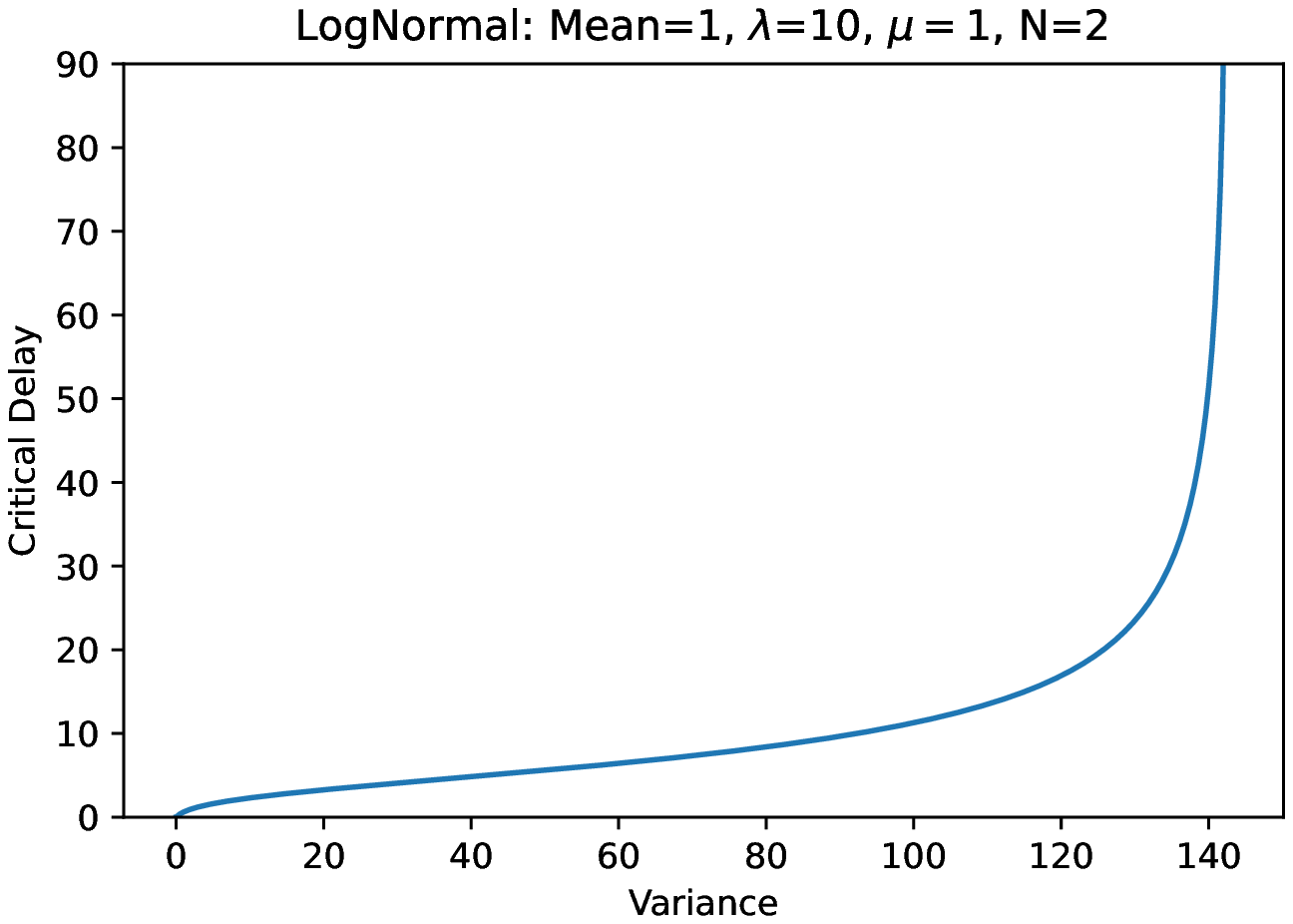} \includegraphics[scale=.6]{./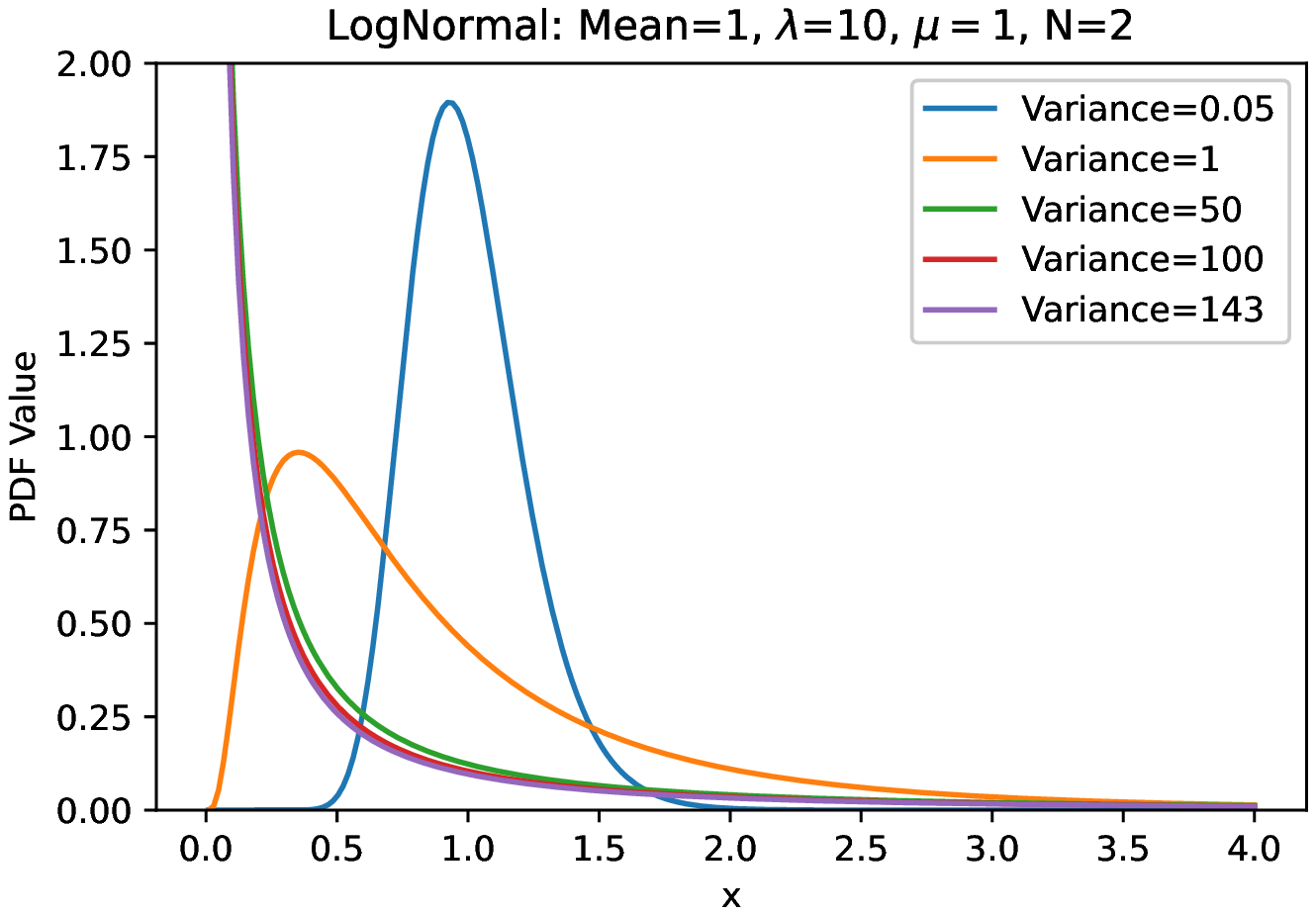}
\caption{Left: The critical delay plotted against the variance of the \textbf{log-normal distribution} that induces the choice model with a fixed mean of 1. Right: A plot of the probability density function used for some selected values of the variance.}
\label{CHOICE_MODEL_fig_lognormal_variance}
\end{figure}


\subsection{Weibull Distribution}

In this section, we assume that the complementary cumulative distribution function $\bar{G}$ that characterizes the choice model is given by a Weibull distribution. Below we provide some useful quantities relating to the Weibull distribution.

\begin{align}
X &\sim \text{Weibull}(\alpha, \beta)\\
g(x) &= \beta \alpha x^{\alpha-1}e^{-\beta x^\alpha}, \hspace{5mm} x, \alpha, \beta > 0 \\
\bar{G}(x) &= e^{-\beta x^\alpha}\\
\mathbb{E}[X] &= \beta^{-1/\alpha}\Gamma(1+1/\alpha)\\
\text{Var}(X) &= \frac{1}{\beta^{2\alpha}}\left[\Gamma\left(1+\frac{2}{\alpha}\right) - \left(\Gamma\left(1+\frac{1}{\alpha}\right)\right)^2\right]\\
C &= - \frac{\lambda \beta \alpha \left( \frac{\lambda}{\mu N}\right)^{\alpha-1} }{N } 
\end{align}

The Weibull distribution is a continuous probability distribution with two positive parameters $\alpha$ and $\beta$. Some of the above quantities are defined in terms of the gamma function \begin{eqnarray}
\Gamma(x) := \int_{0}^{\infty} z^{x-1} e^{-z} dz, \hspace{5mm} \text{Re}(x) > 0
\label{CHOICE_MODEL_gamma_function_def}
\end{eqnarray} which has the property that $\Gamma(n) = (n-1)!$ when $n$ is a positive integer and can be viewed as a smooth interpolation of the factorial function. The Weibull distribution has a polynomial hazard function \begin{eqnarray}
h(x) = \beta \alpha x^{\alpha - 1}
\end{eqnarray} so that when $\alpha < 1$ it is a decreasing function and when $\alpha > 1$ it is an increasing function. We note that when $\alpha = 1$, the hazard rate is a constant $\beta$, just like that of the exponential distribution. Furthermore, the Weibull distribution reduces to an exponential distribution with parameter $\beta > 0$ when $\alpha = 1$.

In Figure \ref{CHOICE_MODEL_fig_weibull} we show queue length plots and phase diagrams on each side of the critical delay. In Figures \ref{CHOICE_MODEL_fig_weibull_mean} and \ref{CHOICE_MODEL_fig_weibull_mean_a_point5} we see how the value of the critical delay varies as the mean is varied with a fixed variance and in Figures \ref{CHOICE_MODEL_fig_weibull_variance} and \ref{CHOICE_MODEL_fig_weibull_variance_a_point5} we see how it changes as the variance is varied with a fixed mean. We see that in the cases considered, there is a critical mean value and a critical variance value at which the critical delay is unbounded. The shape of the critical delay against variance plot appears similar to the one in the log-normal case, noting the initial concave down section for small variance.

\begin{figure}
\begin{tabular}{cc}
  \includegraphics[scale=.55]{./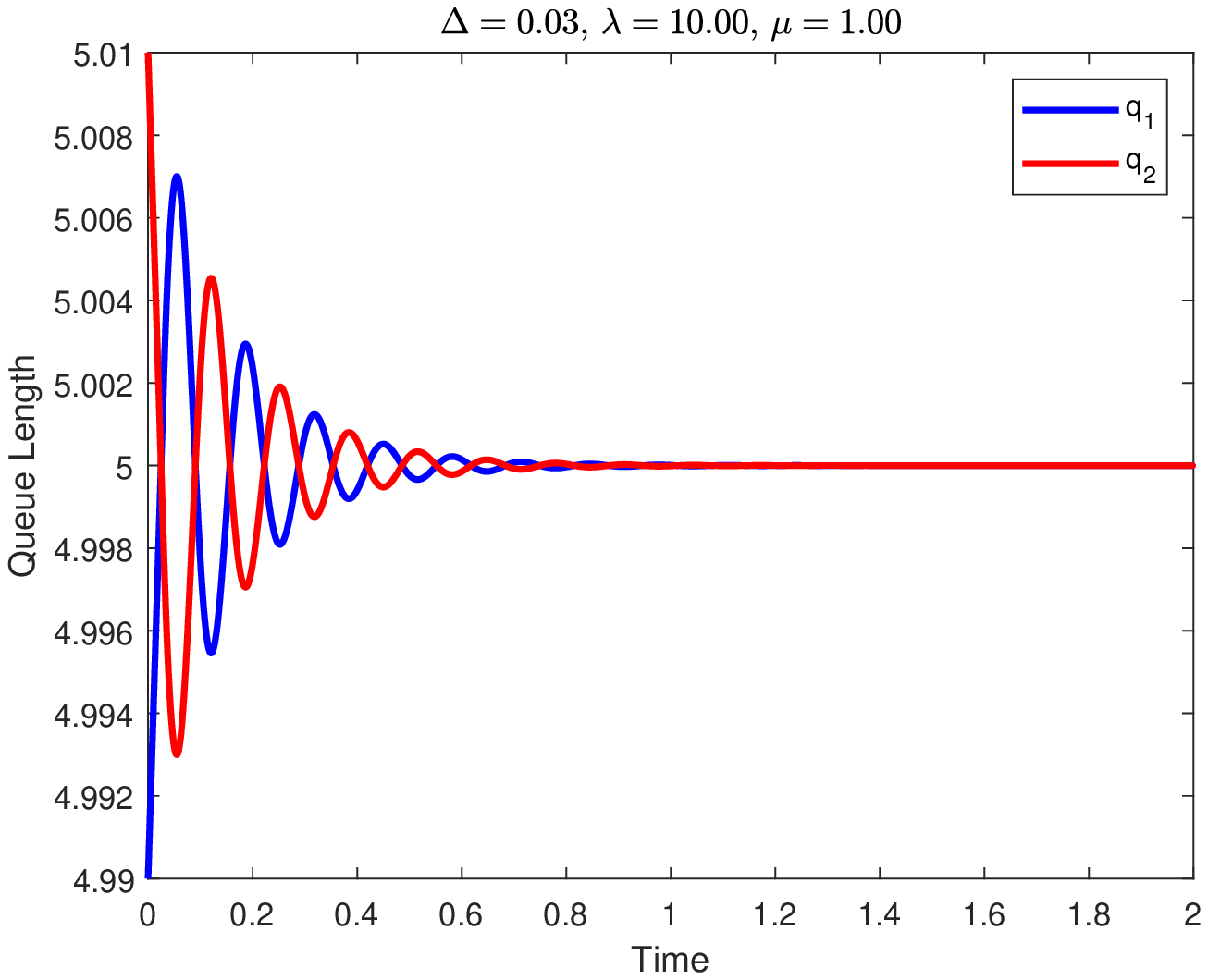} &   \includegraphics[scale=.55]{./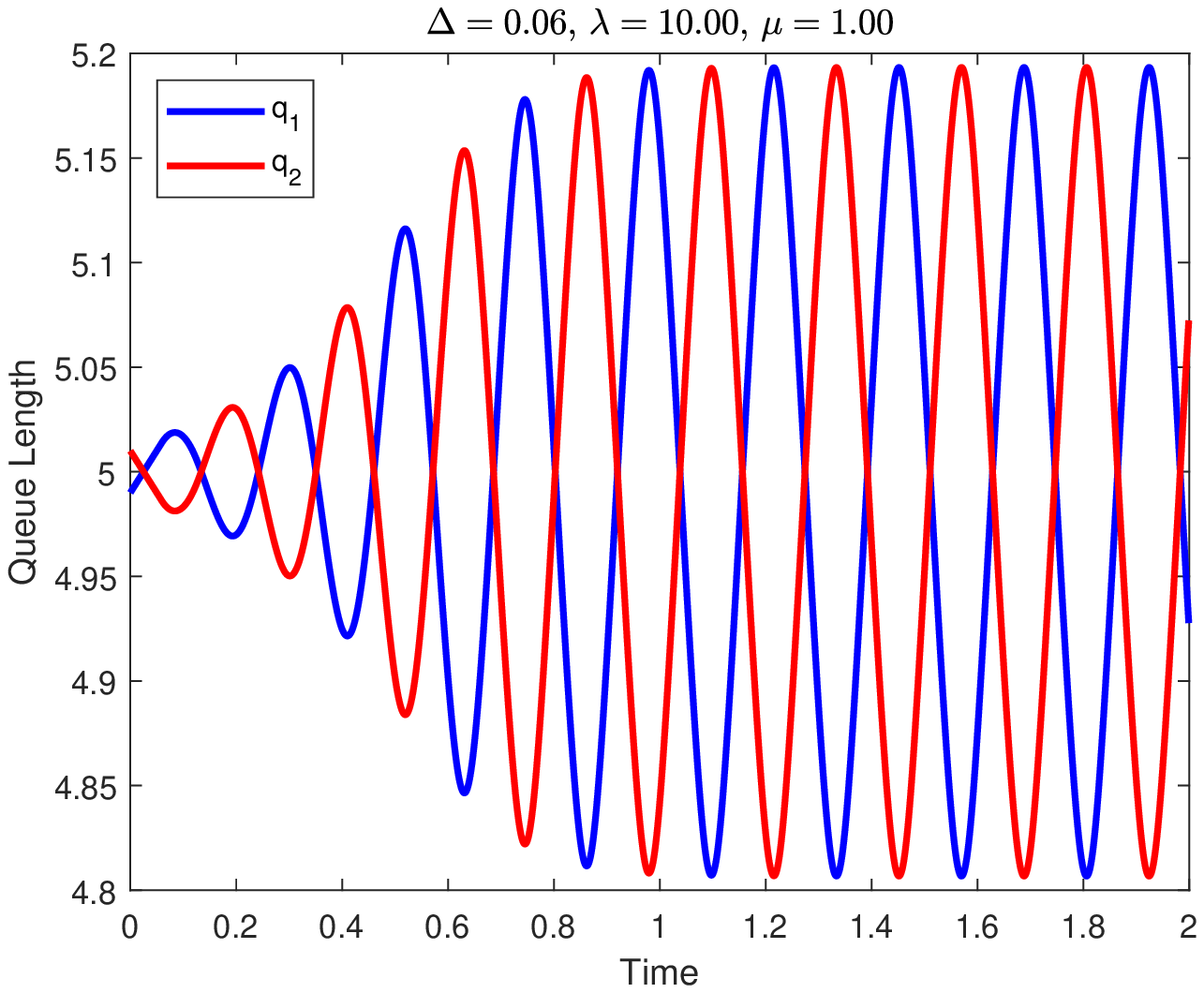} \\
(a)  & (b) \\[6pt]
 \includegraphics[scale=.55]{./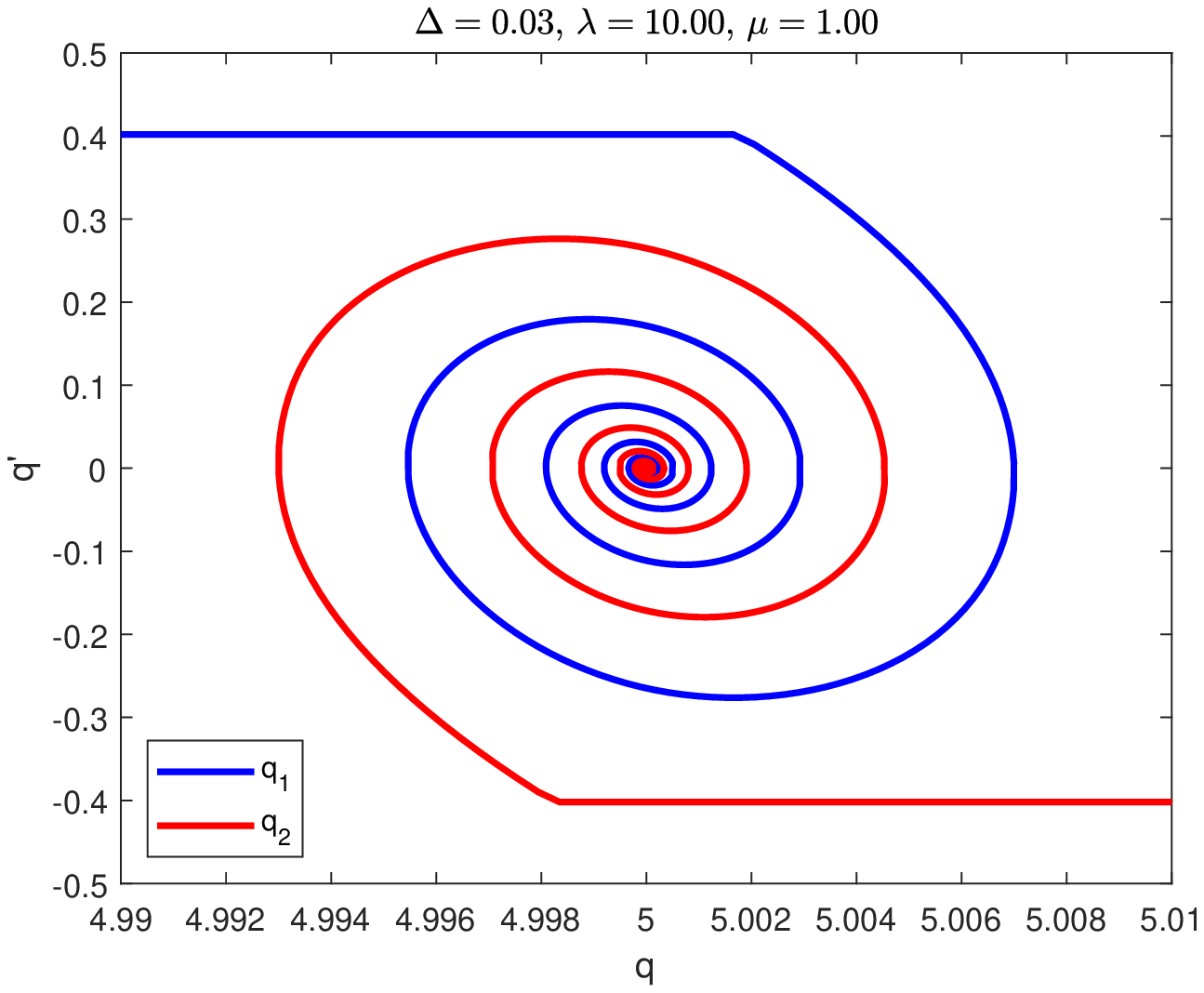} &   \includegraphics[scale=.55]{./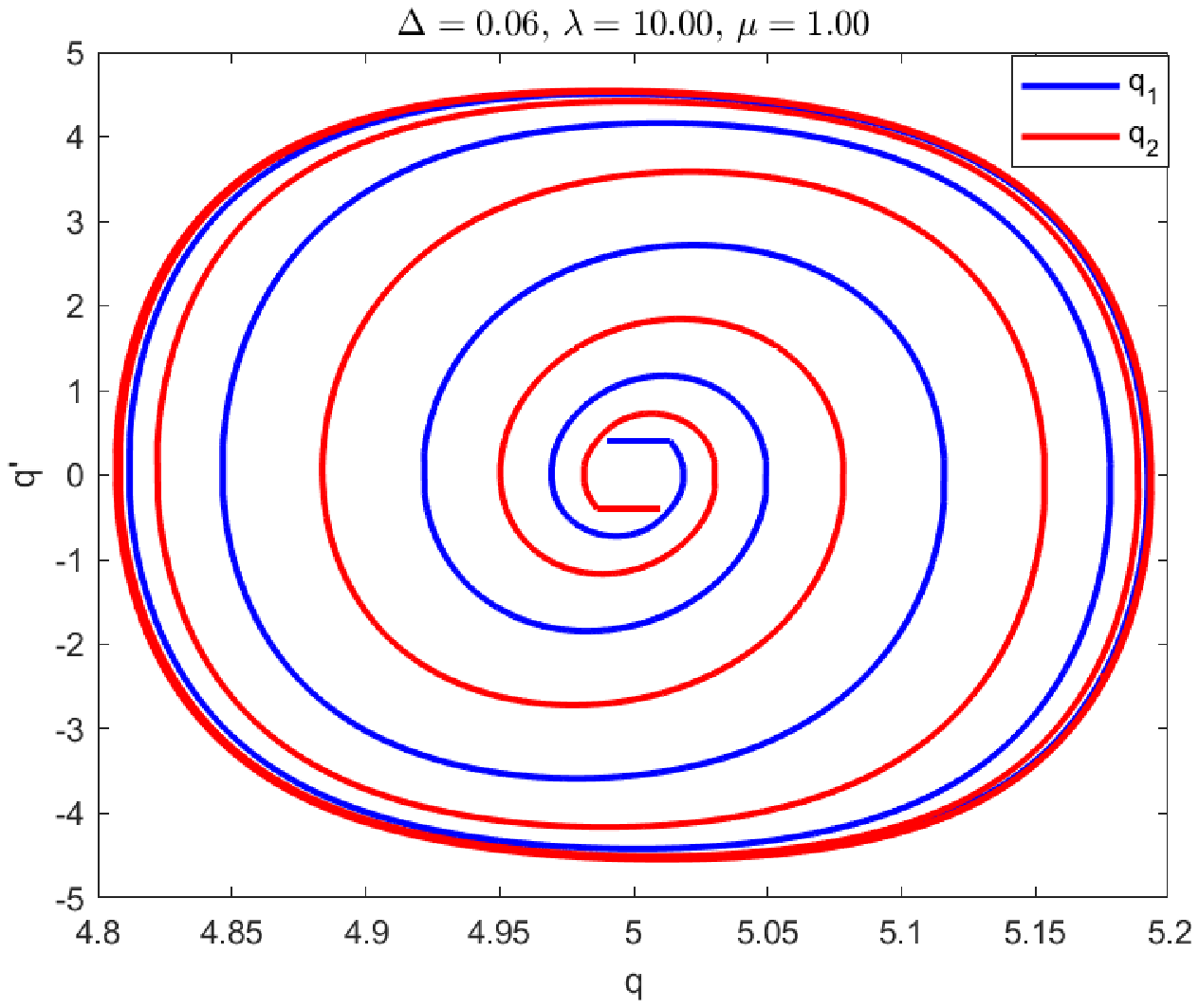} \\
(c)  & (d)  \\[6pt]
\end{tabular}
\caption{Before and after the change in stability using the choice model induced by a \textbf{Weibull distribution} with $\alpha = 2$ and $\beta = \left( \Gamma(1 + \frac{1}{\alpha}) \right)^{\alpha}$ with constant history function on $[-\Delta, 0]$ with $q_1 = 4.99$ and $q_2 = 5.01$, $N = 2, \lambda = 10$, $\mu = 1$. The top two plots are queue length versus time with $\Delta = .03$ (a) and $\Delta = .06$ (b). The bottom two plots are phase plots of the queue length derivative with respect to time against queue length for $\Delta = .03$ (c) and $\Delta = .06$ (d). The critical delay is $\Delta_{\text{cr}} = .0407$.}
\label{CHOICE_MODEL_fig_weibull}
\end{figure}




\begin{figure}
\hspace{-10mm} \includegraphics[scale=.6]{./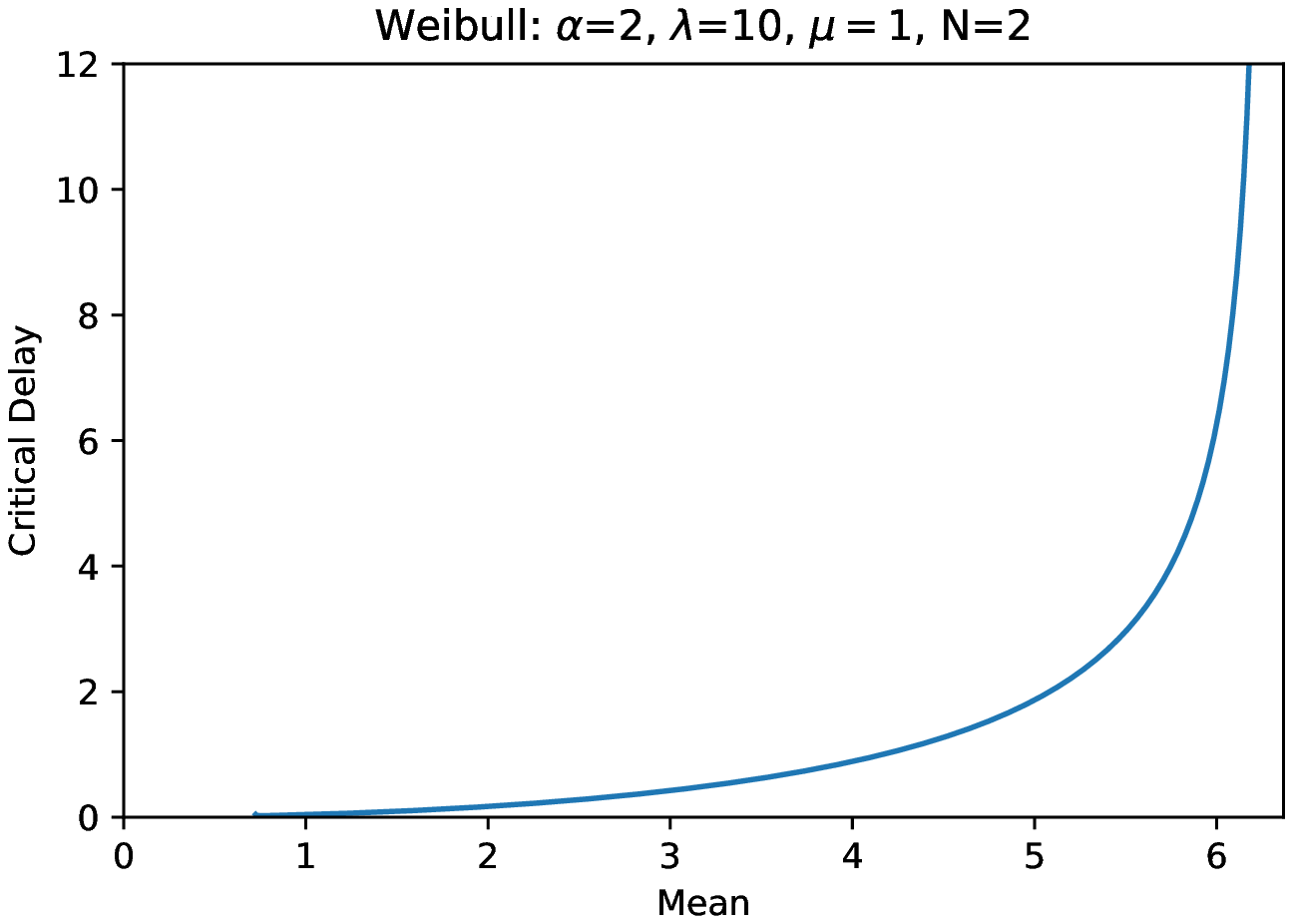} \includegraphics[scale=.6]{./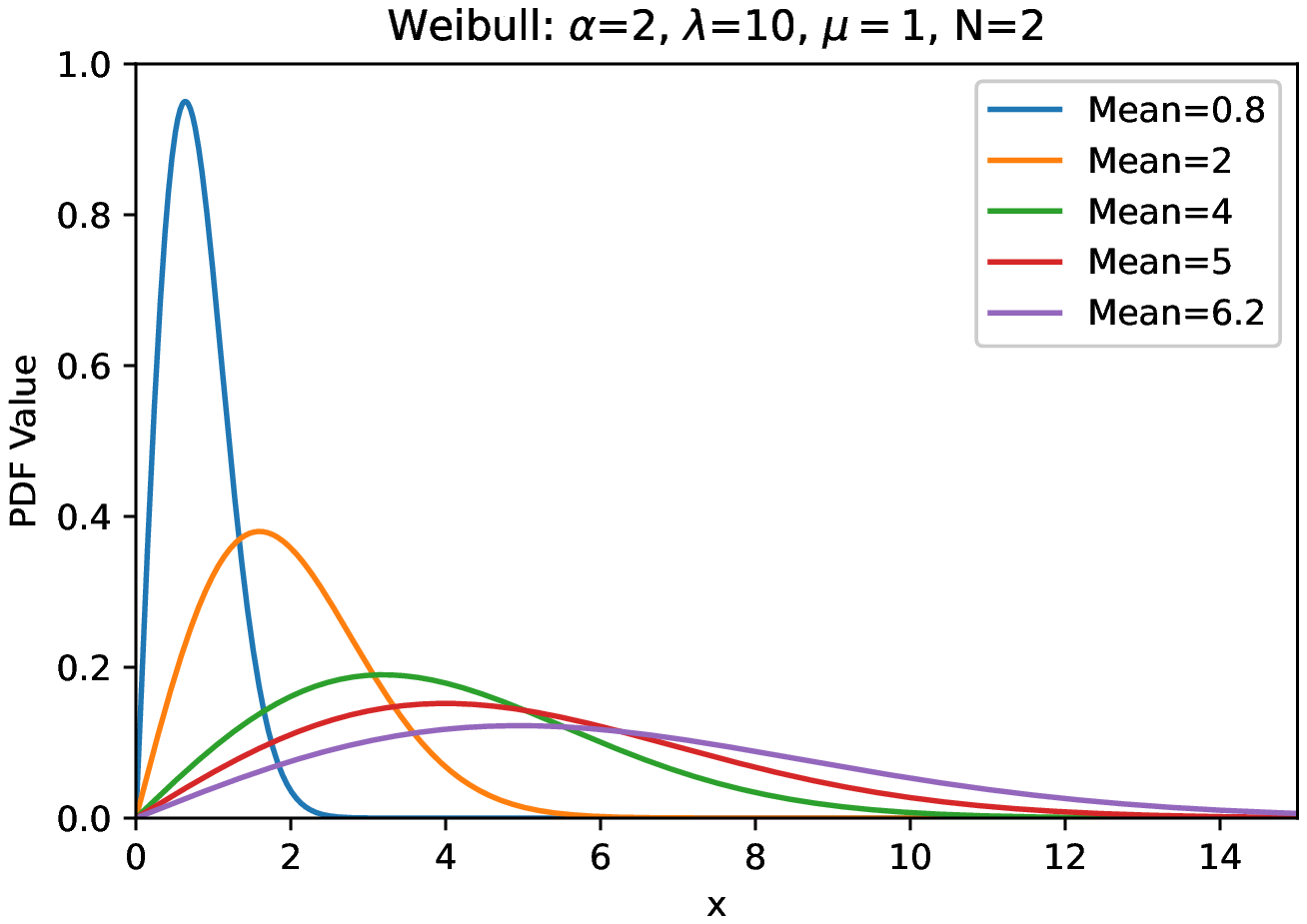}
\caption{Left: The critical delay plotted against the mean of the \textbf{Weibull distribution} that induces the choice model with fixed $\alpha = 2$. Right: A plot of the probability density function used for some selected values of the mean.}
\label{CHOICE_MODEL_fig_weibull_mean}
\end{figure}

\begin{figure}
\hspace{-10mm} \includegraphics[scale=.6]{./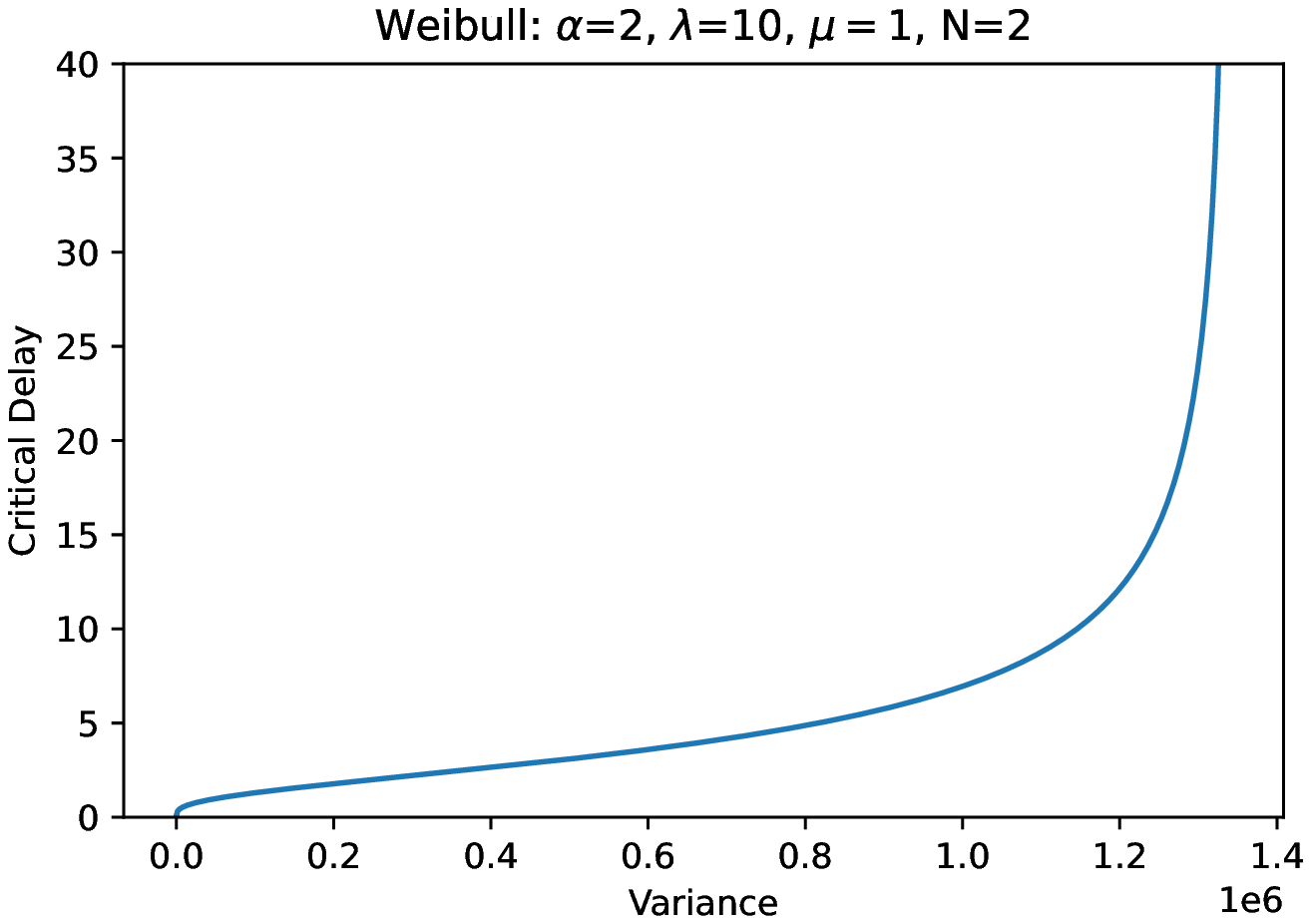} \includegraphics[scale=.6]{./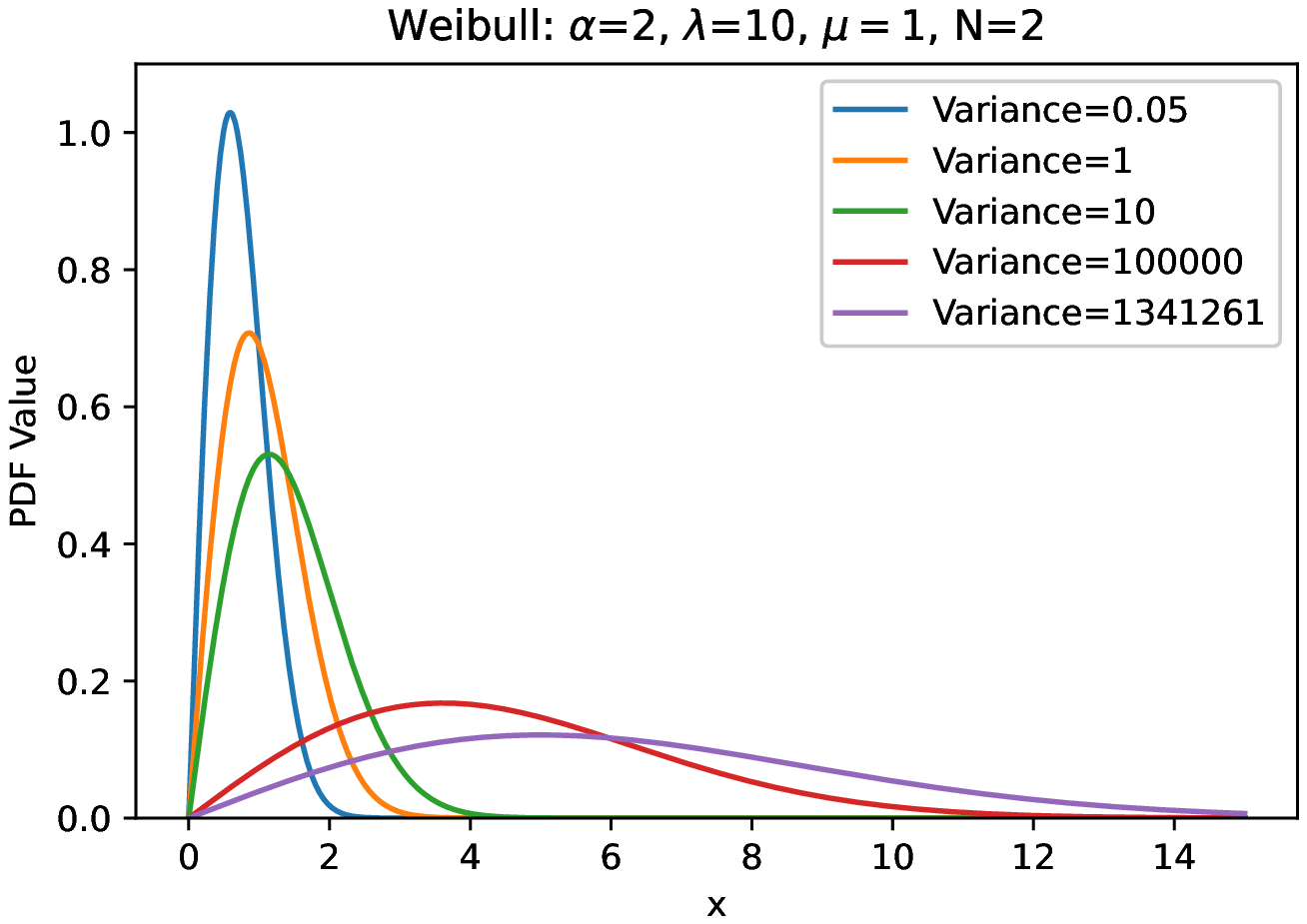}
\caption{Left: The critical delay plotted against the variance of the \textbf{Weibull distribution} that induces the choice model with fixed $\alpha = 2$. Right: A plot of the probability density function used for some selected values of the variance.}
\label{CHOICE_MODEL_fig_weibull_variance}
\end{figure}



\begin{figure}
\hspace{-10mm} \includegraphics[scale=.6]{./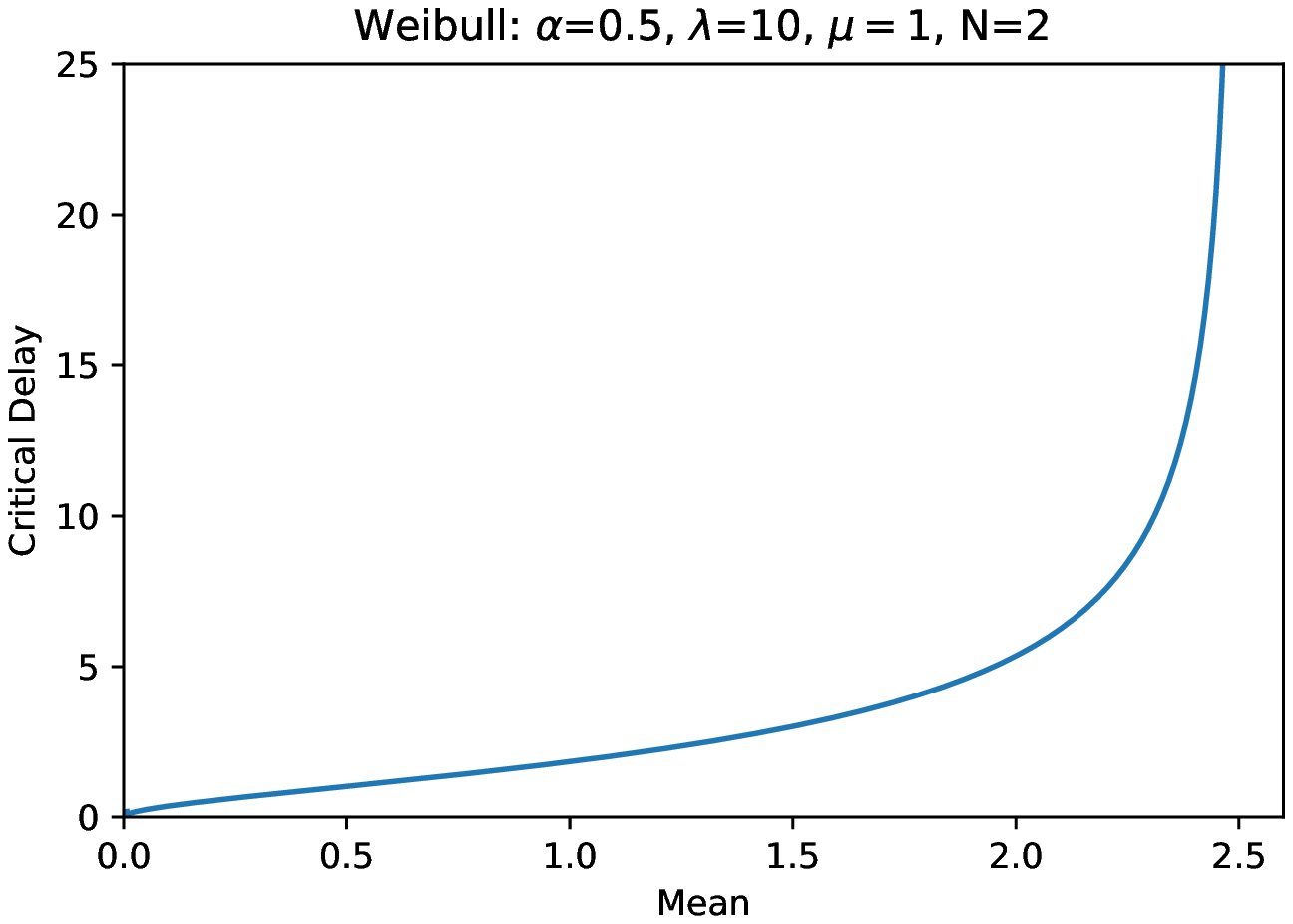} \includegraphics[scale=.6]{./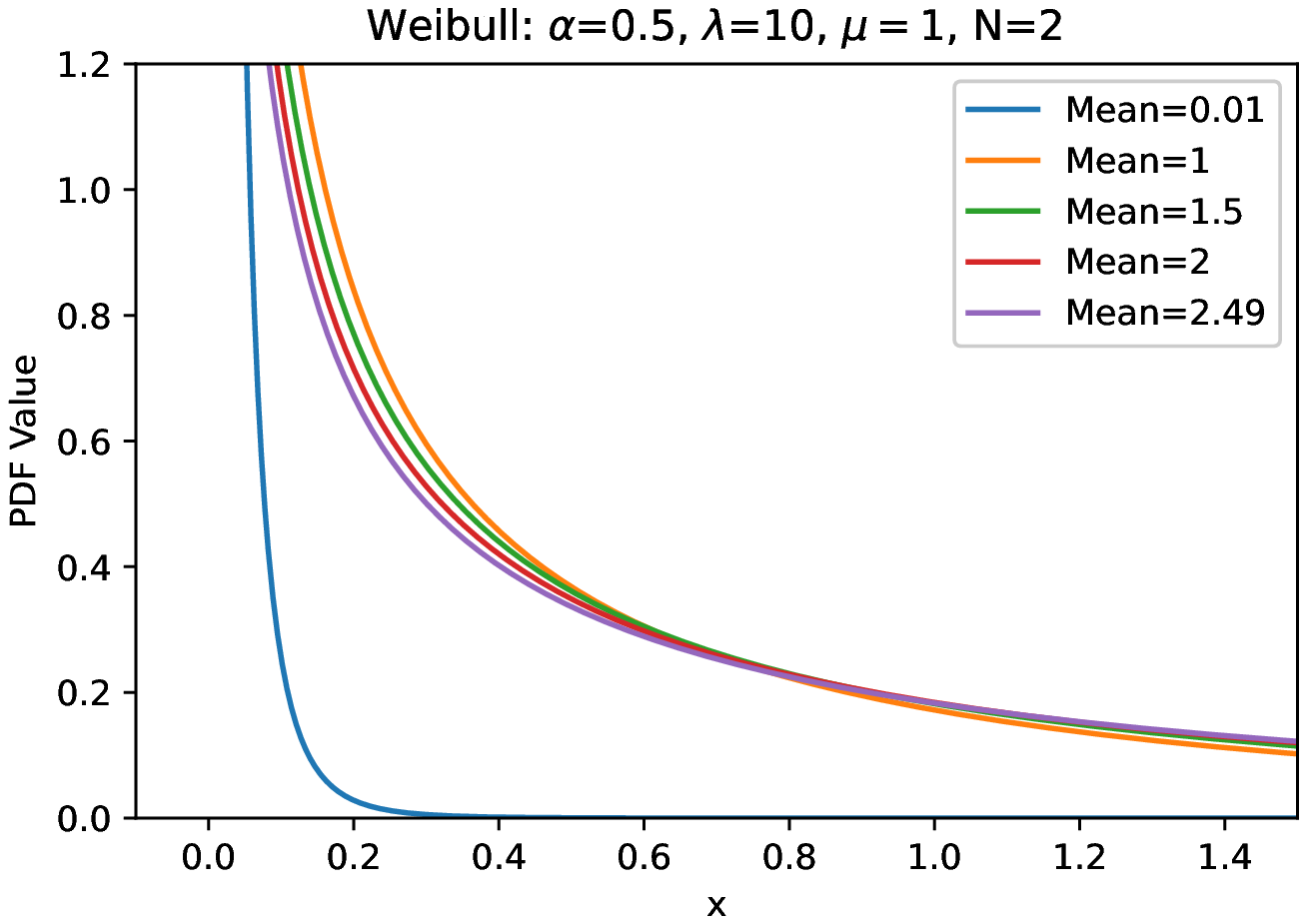}
\caption{Left: The critical delay plotted against the mean of the \textbf{Weibull distribution} that induces the choice model with fixed $\alpha = \frac{1}{2}$. Right: A plot of the probability density function used for some selected values of the mean.}
\label{CHOICE_MODEL_fig_weibull_mean_a_point5}
\end{figure}

\begin{figure}
\hspace{-10mm} \includegraphics[scale=.6]{./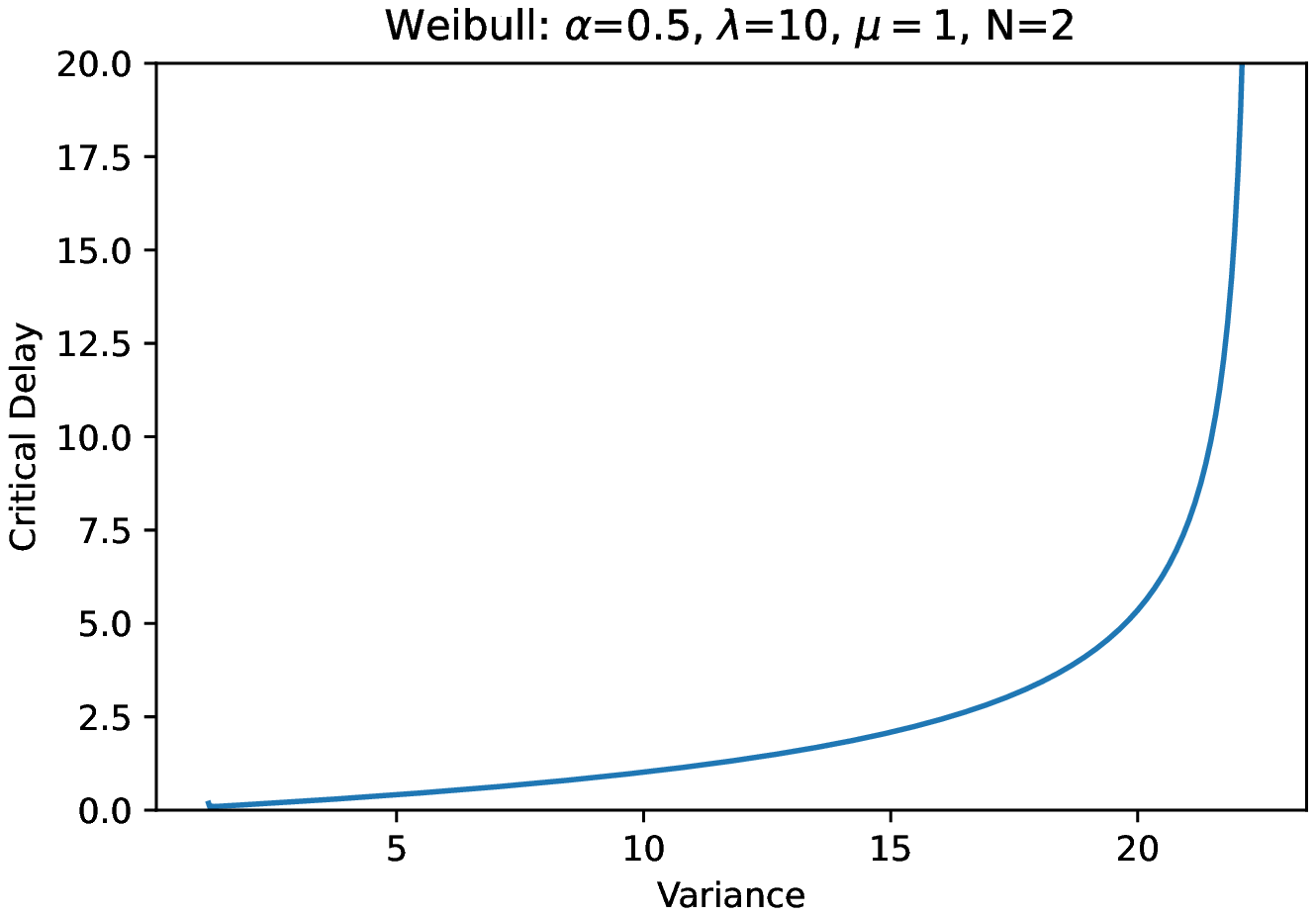} \includegraphics[scale=.6]{./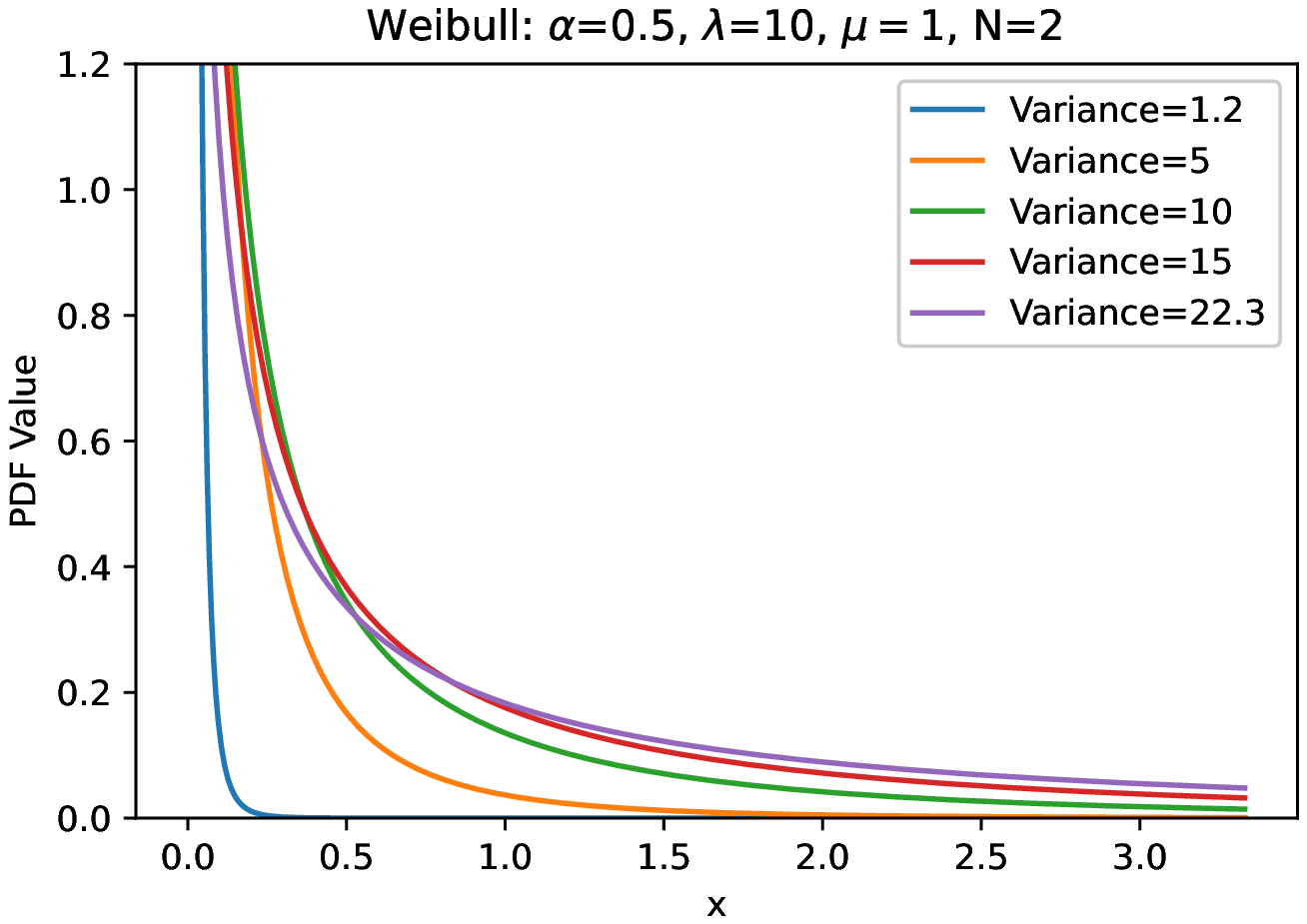}
\caption{Left: The critical delay plotted against the variance of the \textbf{Weibull distribution} that induces the choice model with fixed $\alpha = \frac{1}{2}$. Right: A plot of the probability density function used for some selected values of the variance.}
\label{CHOICE_MODEL_fig_weibull_variance_a_point5}
\end{figure}


\subsection{Gamma Distribution}

In this section, we assume that the complementary cumulative distribution function $\bar{G}$ that characterizes the choice model is given by a gamma distribution. Below we provide some useful quantities relating to the gamma distribution.

\begin{align}
X &\sim \text{Gamma}(\alpha, \beta), \hspace{5mm} \theta > 0\\
g(x) &= \frac{\beta^\alpha}{\Gamma(\alpha)} x^{\alpha - 1} e^{-\beta x }\\
\bar{G}(x) &= 1- \frac{1}{\Gamma(\alpha)} \gamma(\alpha, \beta x)\\
\mathbb{E}[X] &= \frac{\alpha}{\beta}\\
\text{Var}(X) &= \frac{\alpha}{\beta^2}\\
C &= - \frac{\lambda \frac{\beta^\alpha}{\Gamma(\alpha)} \left( \frac{\lambda}{N \mu} \right)^{\alpha - 1} e^{-\beta \frac{\lambda}{N \mu} }}{N \left( 1- \frac{1}{\Gamma(\alpha)} \gamma(\alpha, \beta \frac{\lambda}{N \mu}) \right) }
\end{align}

We consider the gamma distribution which can be viewed as a generalization of the exponential distribution in some sense. In particular, if the parameter $\alpha$ is a positive integer, then the gamma distribution reduces to an Erlang distribution which reduces to an exponential distribution if $\alpha = 1$. Some of the above quantities are defined in terms of the gamma function defined in Equation \ref{CHOICE_MODEL_gamma_function_def}, but we note that some quantities also depend on the lower incomplete gamma function \begin{eqnarray}
\gamma(x, t) := \int_{0}^{t} z^{x - 1} e^{-z} dz, \hspace{5mm} \text{Re}(x) > 0.
\end{eqnarray} Alternatively, one could choose to express quantities involving the lower incomplete gamma function in terms of the upper incomplete gamma function \begin{eqnarray}
\Gamma(x, t) := \int_{t}^{\infty} z^{x - 1} e^{-z} dz, \hspace{5mm} \text{Re}(x) > 0
\end{eqnarray} as we have the apparent relation \begin{eqnarray}
\Gamma(x) = \gamma(x, t) + \Gamma(x, t)
\end{eqnarray} so that one could instead write \begin{eqnarray}
\bar{G}(x) = \frac{1}{\Gamma(\alpha)} \Gamma(\alpha, \beta x) .
\end{eqnarray}
With this in mind, one could alternatively express the eigenvalue as \begin{eqnarray}
C = - \frac{\lambda}{N} \frac{\beta^{\alpha} \left( \frac{\lambda}{N \mu} \right)^{\alpha - 1} e^{ - \beta \frac{\lambda}{N \mu}}}{\Gamma(\alpha, \beta \frac{\lambda}{N \mu})}.
\end{eqnarray}

In Figure \ref{CHOICE_MODEL_fig_gamma} we show queue length plots and phase diagrams on each side of the critical delay. In Figure \ref{CHOICE_MODEL_fig_gamma_mean} we see how the value of the critical delay varies as the mean is varied with a fixed variance and in Figure \ref{CHOICE_MODEL_fig_gamma_variance} we see how it changes as the variance is varied with a fixed mean. For unit variance, we see that there appear to be two critical mean values at which the critical delay becomes unbounded. For unit mean, there appears to be a single critical value of the variance.

\begin{figure}
\begin{tabular}{cc}
  \includegraphics[scale=.55]{./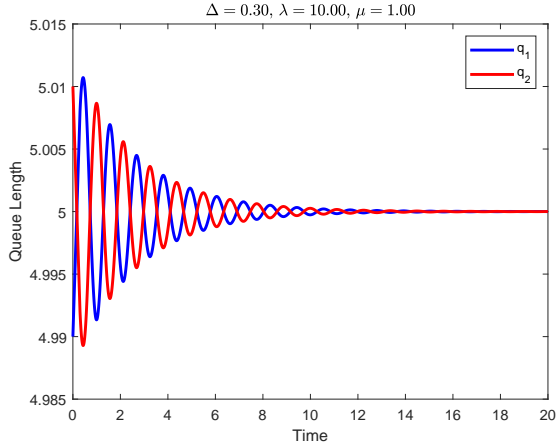} &   \includegraphics[scale=.55]{./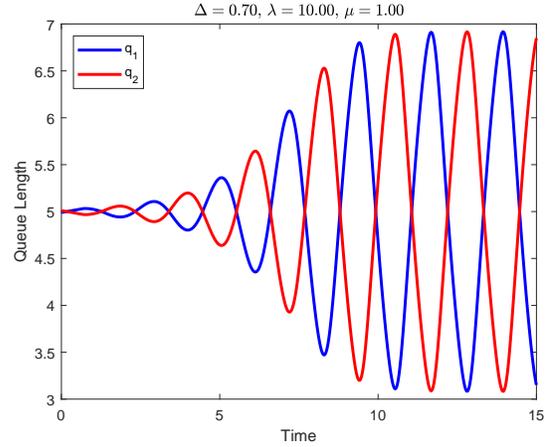} \\
(a)  & (b) \\[6pt]
 \includegraphics[scale=.55]{./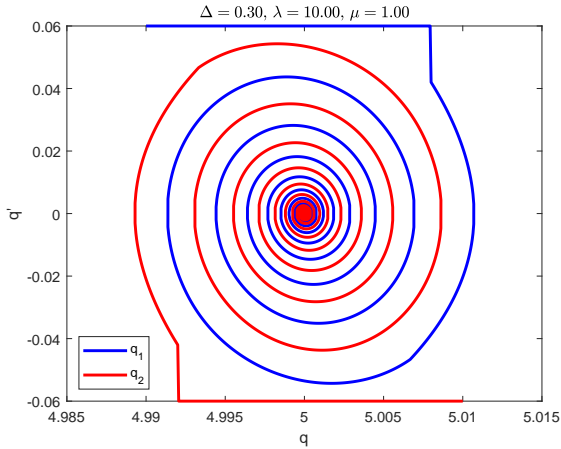} &   \includegraphics[scale=.55]{./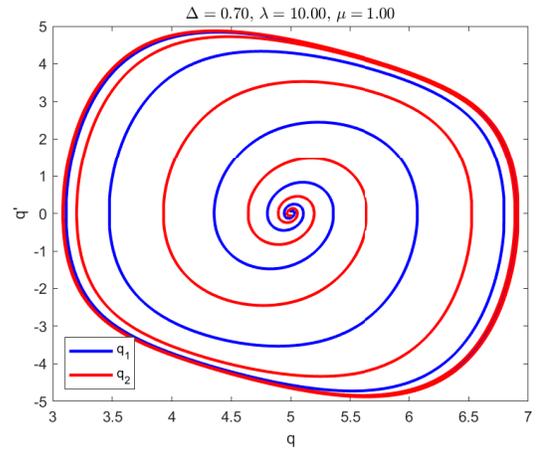} \\
(c)  & (d)  \\[6pt]
\end{tabular}
\caption{Before and after the change in stability using the choice model induced by a \textbf{Gamma distribution} with $\alpha = \beta = 1$ (which reduces to an exponential distribution) with constant history function on $[-\Delta, 0]$ with $q_1 = 4.99$ and $q_2 = 5.01$, $N = 2, \lambda = 10$, $\mu = 1$. The top two plots are queue length versus time with $\Delta = .3$ (a) and $\Delta = .7$ (b). The bottom two plots are phase plots of the queue length derivative with respect to time against queue length for $\Delta = .3$ (c) and $\Delta = .7$ (d). The critical delay is $\Delta_{\text{cr}} = .3617$.}
\label{CHOICE_MODEL_fig_gamma}
\end{figure}

\begin{figure}
\hspace{-10mm} \includegraphics[scale=.6]{./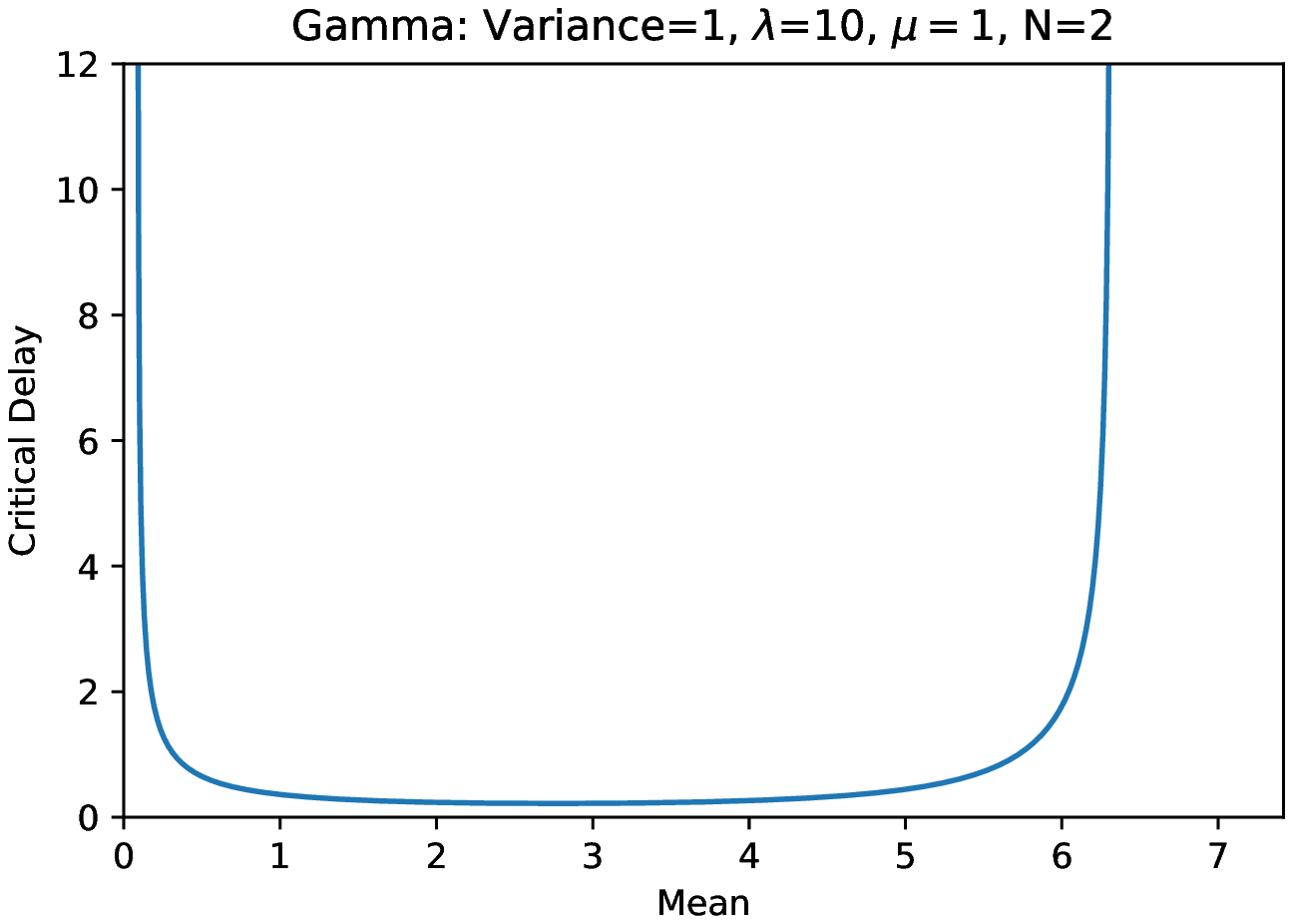} \includegraphics[scale=.6]{./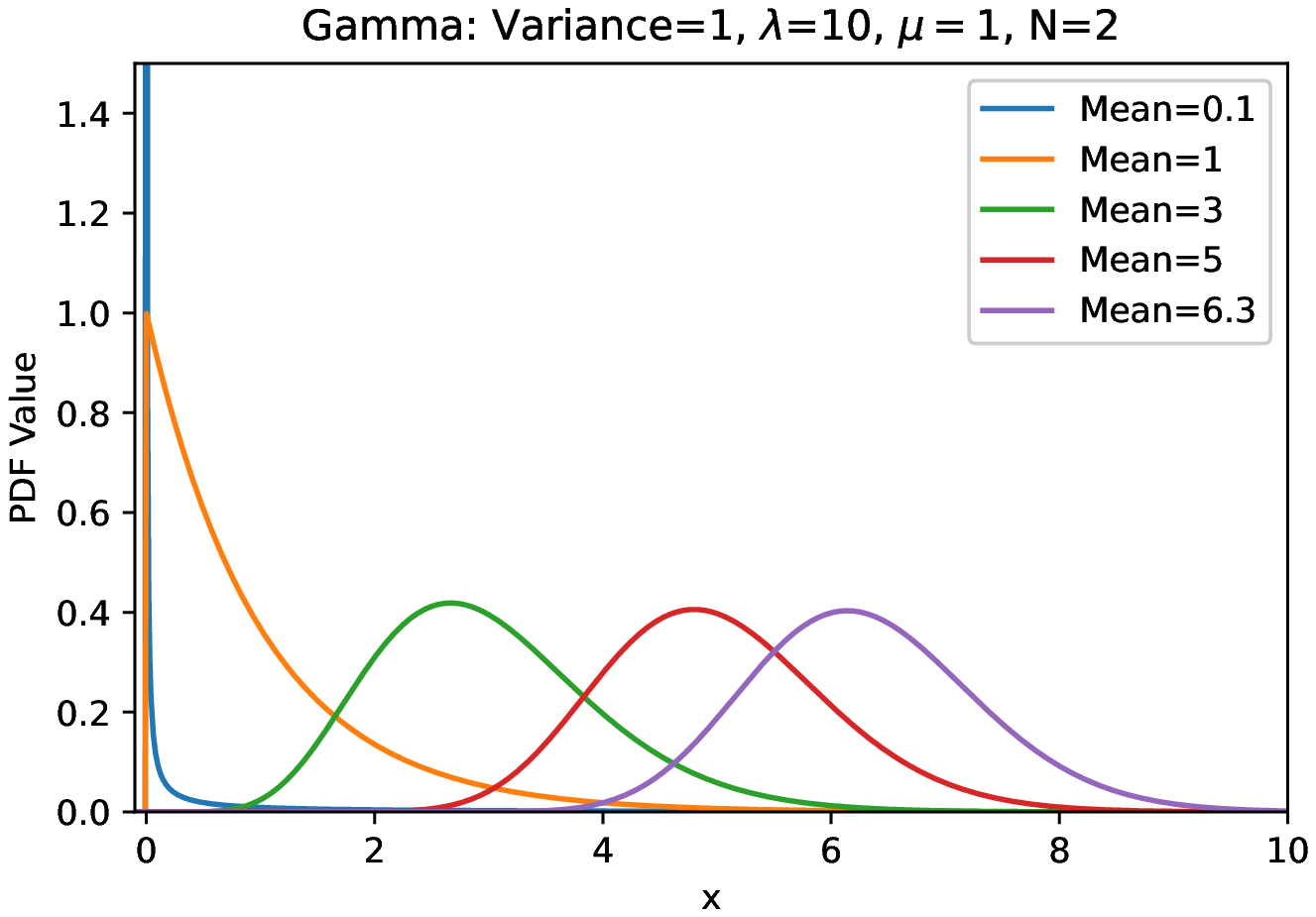}
\caption{Left: The critical delay plotted against the mean of the \textbf{gamma distribution} that induces the choice model with a fixed variance of 1. Right: A plot of the probability density function used for some selected values of the mean.}
\label{CHOICE_MODEL_fig_gamma_mean}
\end{figure}

\begin{figure}
\hspace{-10mm} \includegraphics[scale=.6]{./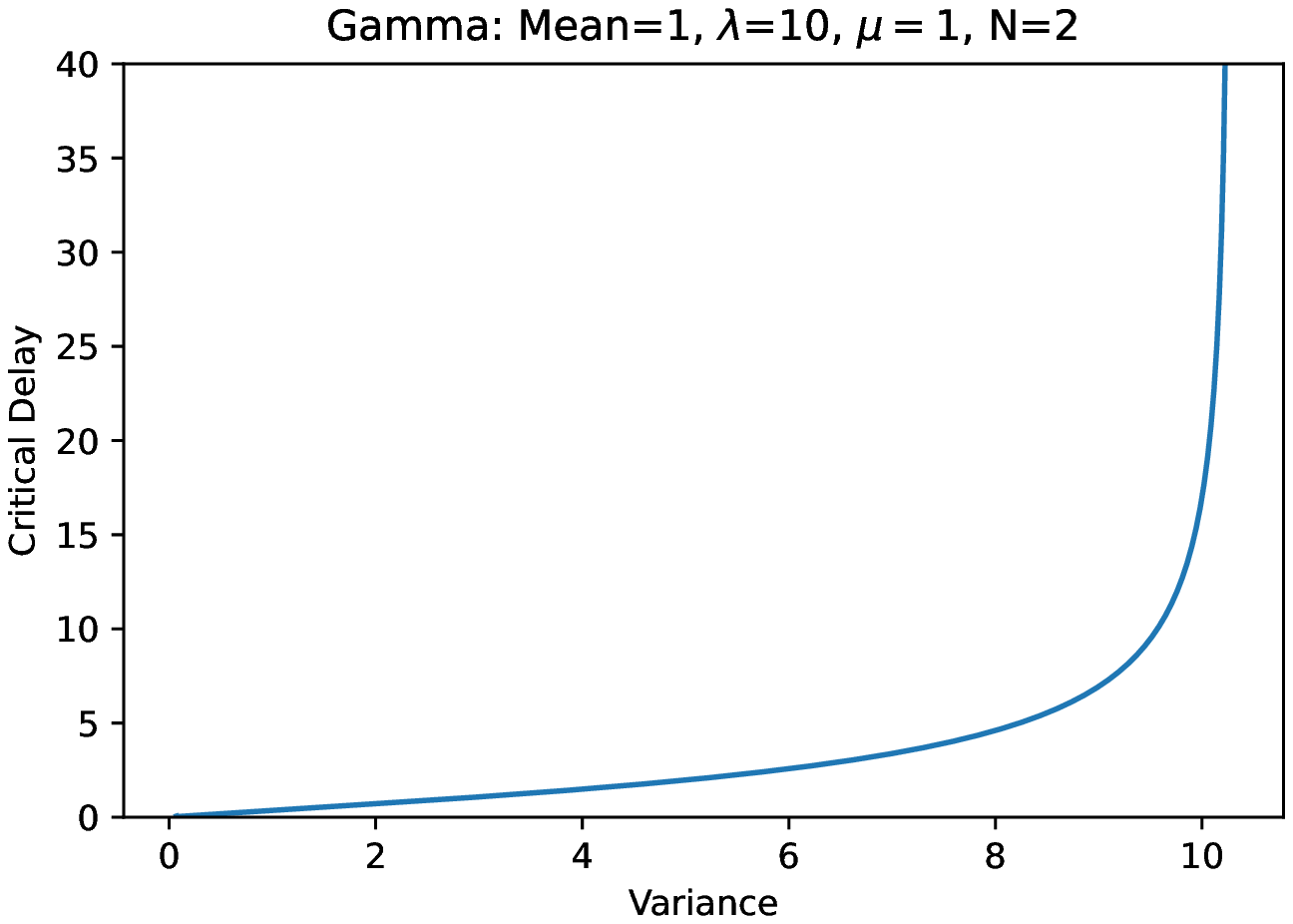} \includegraphics[scale=.6]{./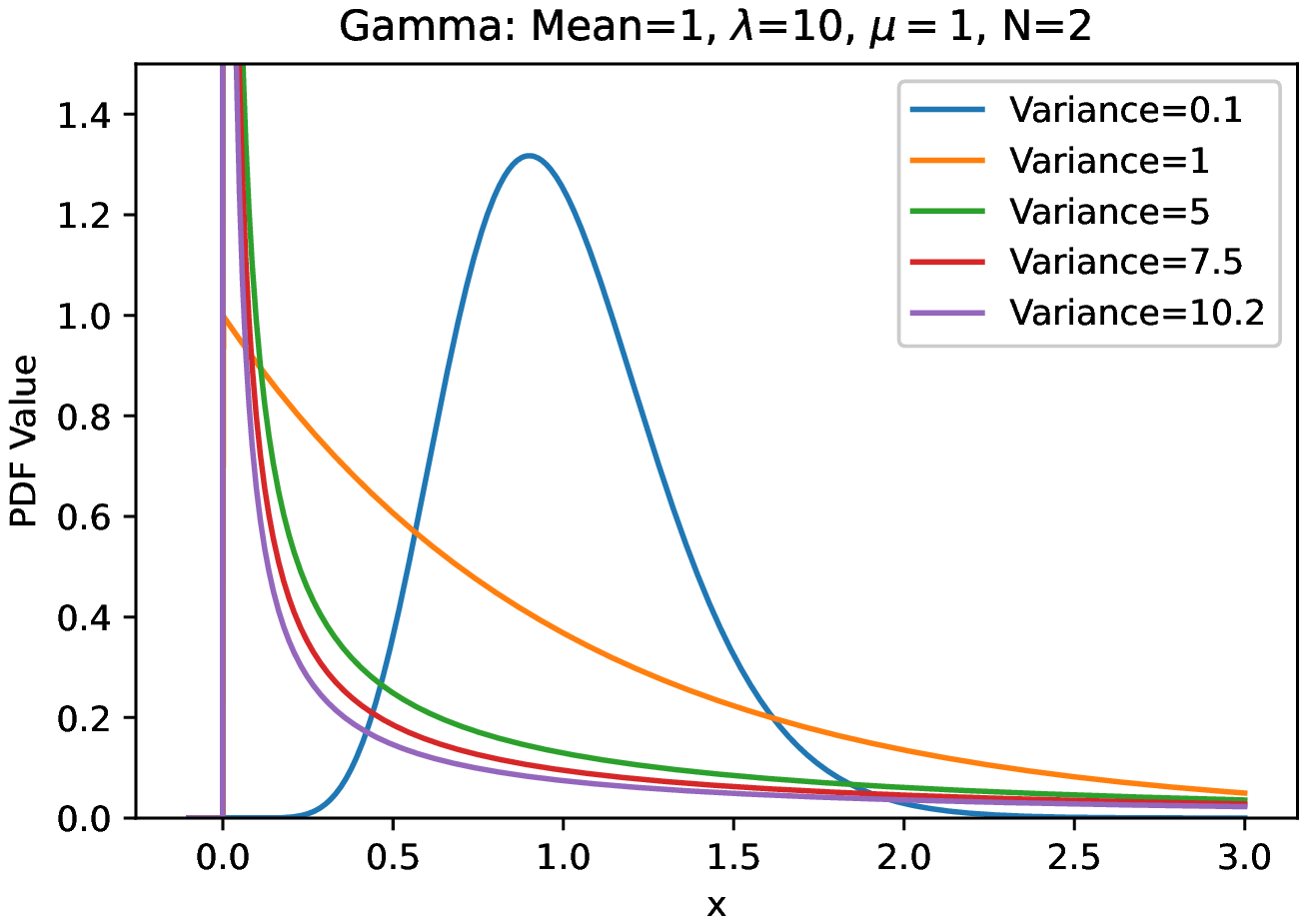}
\caption{Left: The critical delay plotted against the variance of the \textbf{gamma distribution} that induces the choice model with a fixed mean of 1. Right: A plot of the probability density function used for some selected values of the variance.}
\label{CHOICE_MODEL_fig_gamma_variance}
\end{figure}


\subsection{Phase Type Distribution}

In this section, we assume that the complementary cumulative distribution function $\bar{G}$ that characterizes the choice model is given by a phase-type distribution. Below we provide some useful quantities relating to phase-type distributions.

\begin{align}
X &\sim \text{Ph}(\boldsymbol{\alpha}, \mathbf{S^{0}}, S)\\
g(x) &= \boldsymbol{\alpha}\exp({S}x)\mathbf{S^{0}} \\
\bar{G}(x) &= \boldsymbol{\alpha}\exp({S}x)\mathbf{1} \\
\mathbb{E}[X^n] &= (-1)^{n}n!\boldsymbol{\alpha}{S}^{-n}\mathbf{1}\\
\text{Var}(X) &= 2\boldsymbol{\alpha}{S}^{-2}\mathbf{1}-(\boldsymbol{\alpha}{S}^{-1}\mathbf{1})^{2}\\
C &= - \frac{ \lambda \boldsymbol{\alpha}\exp \left({S} \frac{\lambda}{N \mu} \right)\mathbf{S^{0}} }{N \boldsymbol{\alpha}\exp \left({S}\frac{\lambda}{N \mu} \right)\mathbf{1}}
\end{align}

Phase-type distributions describe the distribution of time spent to reach an absorbing state in a continuous-time Markov chain with a finite state space with one absorbing state and the remaining states being transient. Consider a continuous-time Markov chain with $p + 1$ states, $p \in \mathbb{Z}^+$ of which are transient states and the remaining state is an absorbing state. Let the state space be $\{0, 1, ..., p \}$ with state $0$ the absorbing state and let $\boldsymbol{\alpha} \in \mathbb{R}^{1 \times p}$ be a vector of probabilities where the $i^{\text{th}}$ entry of $\boldsymbol{\alpha}$ corresponds to the probability of the continuous-time Markov chain starting at the $i^{\text{th}}$ state. The continuous-time Markov chain has transition-rate matrix \begin{eqnarray}
Q = \begin{bmatrix}
0 & \boldsymbol{0} \\
\boldsymbol{S}^{0} & S
\end{bmatrix}
\end{eqnarray} where $\boldsymbol{0} \in \mathbb{R}^{1 \times p}$ is a vector with each entry $0$, $S \in \mathbb{R}^{p \times p}$, and $\boldsymbol{S}^{0} = - S \boldsymbol{1}$ where $\boldsymbol{1} \in \mathbb{R}^{p \times 1}$ is a vector where each entry is $1$. Special cases of phase-type distributions include the Erlang distribution, which is the distribution of a sum of exponential random variables, and the hyperexponential distribution, which is a mixture of exponential distributions. 

For example, a hyperexponential distribution has probability density function and complementary cumulative distribution function 
\begin{align}
    g(x) &= \sum_{k=1}^{m} p_k \theta_k e^{- \theta_k x}\\
    \bar{G}(x) &= \sum_{k=1}^{m} p_k e^{-\theta_k x}
\end{align}
with $\sum_{k=1}^{m} p_k = 1$ and $p_k \in [0, 1]$ for $k=1,...,m$. It follows that, under the corresponding choice model, the probability of joining the $i^{\text{th}}$ queue is
\begin{eqnarray}
\frac{\bar{G}(q_i(t - \Delta))}{ \sum_{j=1}^{N} \bar{G}(q_j(t - \Delta)) } = \frac{\sum_{k=1}^{m} p_k  e^{-\theta_k q_i(t - \Delta)} }{ \sum_{j=1}^{N} \sum_{k=1}^{m} p_k  e^{-\theta_k q_j(t - \Delta) }}
\end{eqnarray}
for $i=1, ..., N$. It turns out that we can view this choice of $\bar{G}$ as the moment-generating function of a discrete random variable $Y < 0$ that takes on the value $Y = \theta_k$ with probability $p_k$ for $k=1, ..., m$.

In Figure \ref{CHOICE_MODEL_fig_hyperexponential} we show queue length plots and phase diagrams on each side of the critical delay when considering the special case of a hyperexponential distribution. In Figure \ref{CHOICE_MODEL_fig_hyperexponential_mean} we see how the value of the critical delay varies as the mean is varied with a fixed variance and in Figure \ref{CHOICE_MODEL_fig_hyperexponential_variance} we see how it changes as the variance is varied with a fixed mean, both for the case of a hyperexponential distribution. For the cases considered, there appears to be a critical value of the variance as well as a few critical values for the mean.

\begin{figure}
\begin{tabular}{cc}
  \includegraphics[scale=.55]{./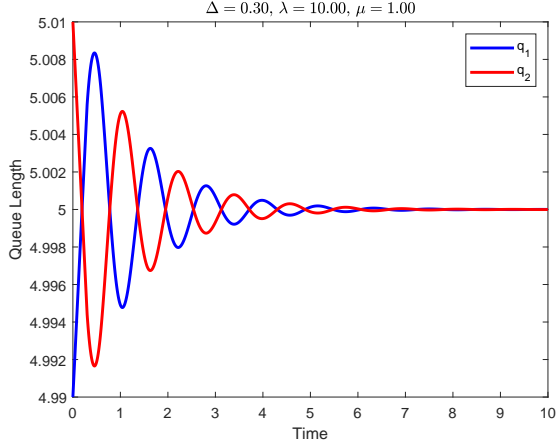} &   \includegraphics[scale=.55]{./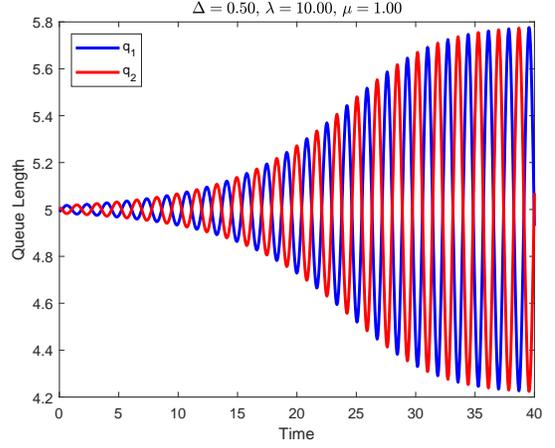} \\
(a)  & (b) \\[6pt]
 \includegraphics[scale=.55]{./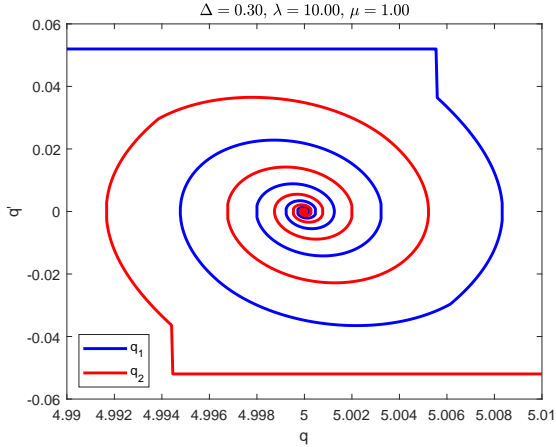} &   \includegraphics[scale=.55]{./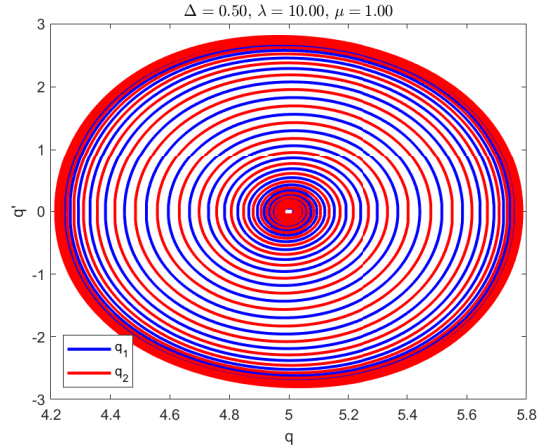} \\
(c)  & (d)  \\[6pt]
\end{tabular}
\caption{Before and after the change in stability using the choice model induced by a \textbf{phase-type distribution} with $\boldsymbol{\alpha} = (.3, .7)$ and $S = \text{diag}(-1.8367, -.8367)$ (which results in a hyperexponential distribution with mean 1 and variance 1) with constant history function on $[-\Delta, 0]$ with $q_1 = 4.99$ and $q_2 = 5.01$, $N = 2, \lambda = 10$, $\mu = 1$. The top two plots are queue length versus time with $\Delta = .3$ (a) and $\Delta = .5$ (b). The bottom two plots are phase plots of the queue length derivative with respect to time against queue length for $\Delta = .3$ (c) and $\Delta = .5$ (d). The critical delay is $\Delta_{\text{cr}} = .4443$.}
\label{CHOICE_MODEL_fig_hyperexponential}
\end{figure}



\begin{figure}
\hspace{-10mm} \includegraphics[scale=.6]{./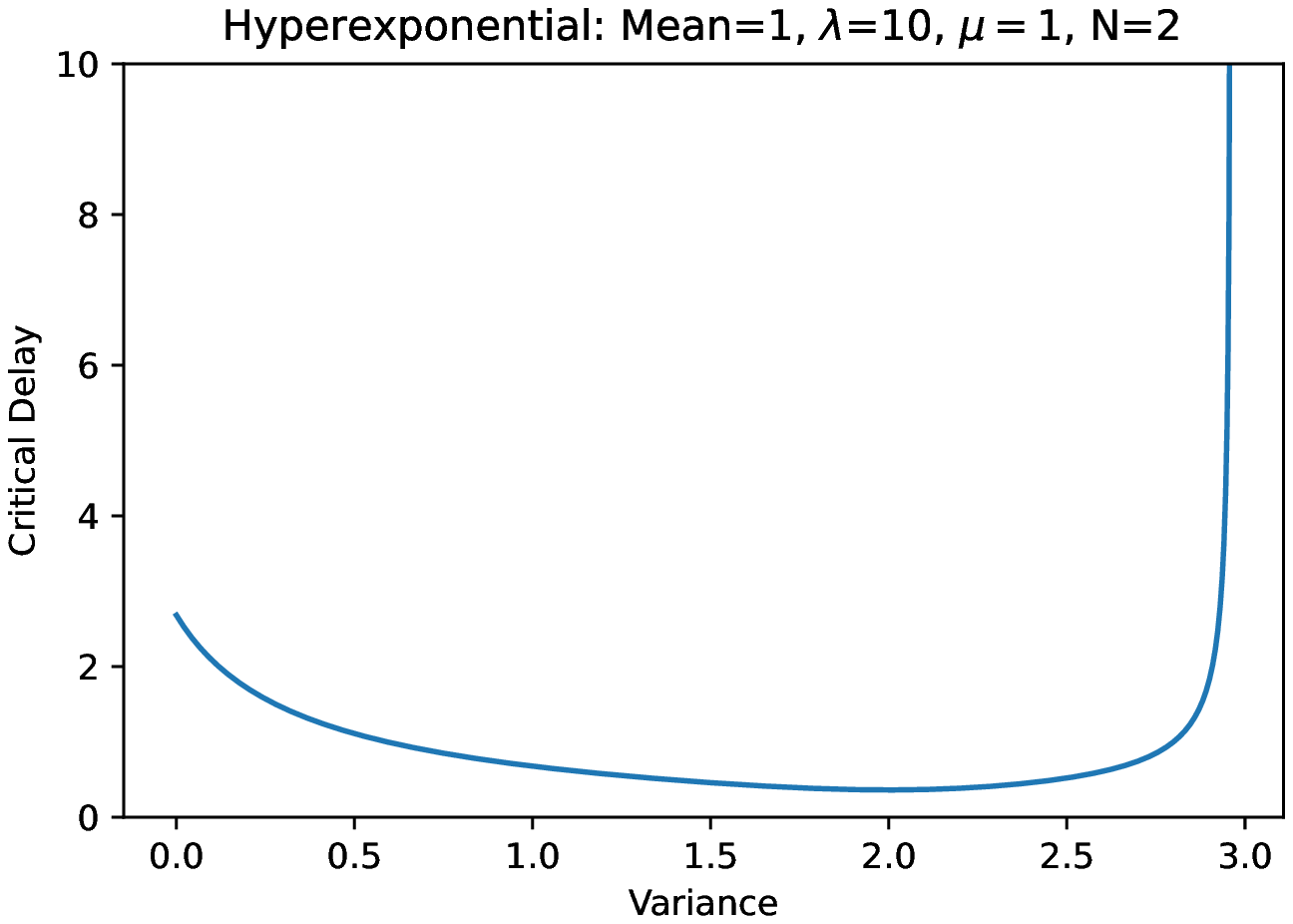} \includegraphics[scale=.6]{./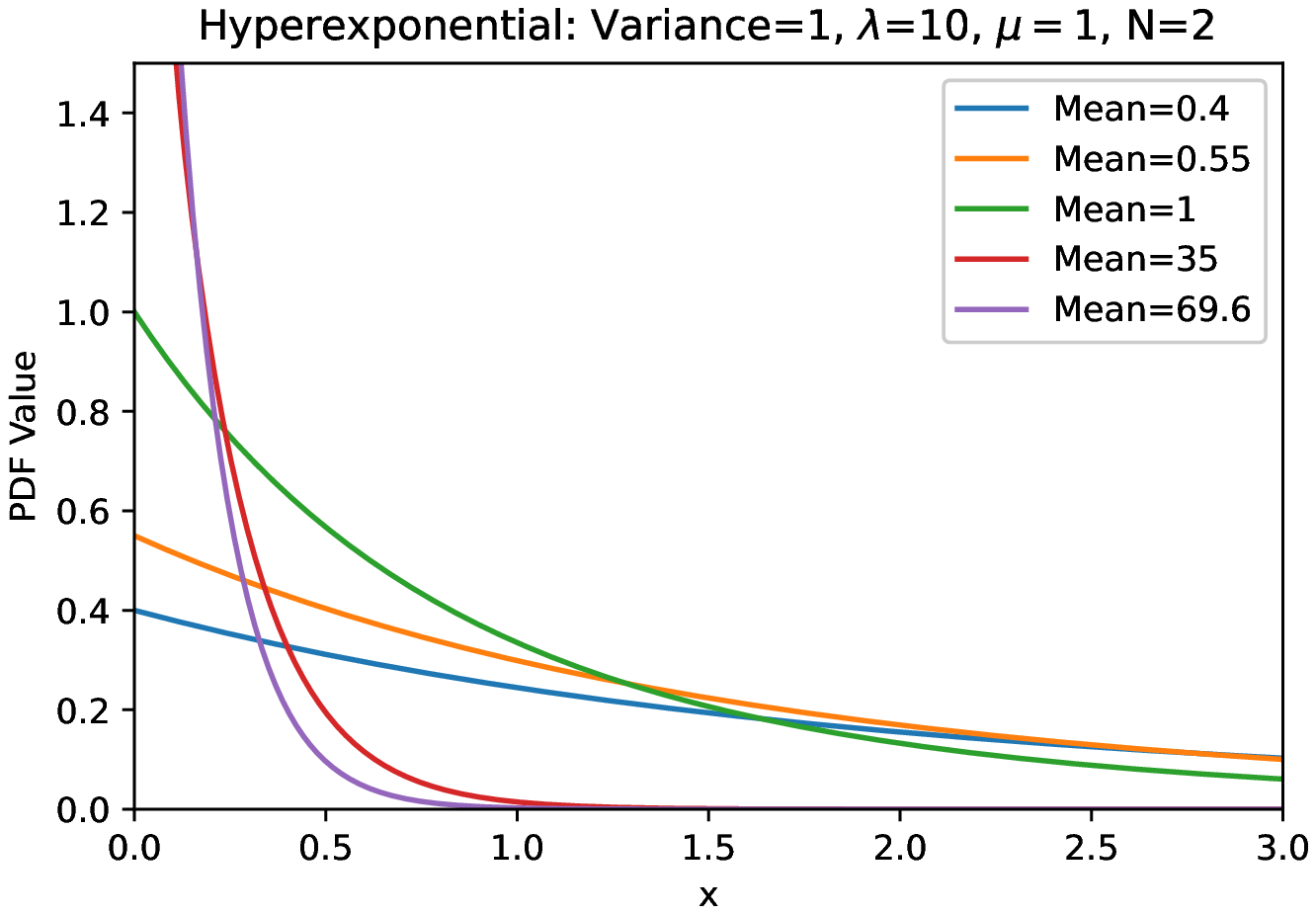}
\caption{Left: The critical delay plotted against the mean of a \textbf{hyperexponential distribution} (in this case, composed of two evenly-weighted exponential distributions) that induces the choice model with a fixed variance of 1. Right: A plot of the probability density function used for some selected values of the mean.}
\label{CHOICE_MODEL_fig_hyperexponential_mean}
\end{figure}

\begin{figure}
\hspace{-10mm} \includegraphics[scale=.6]{./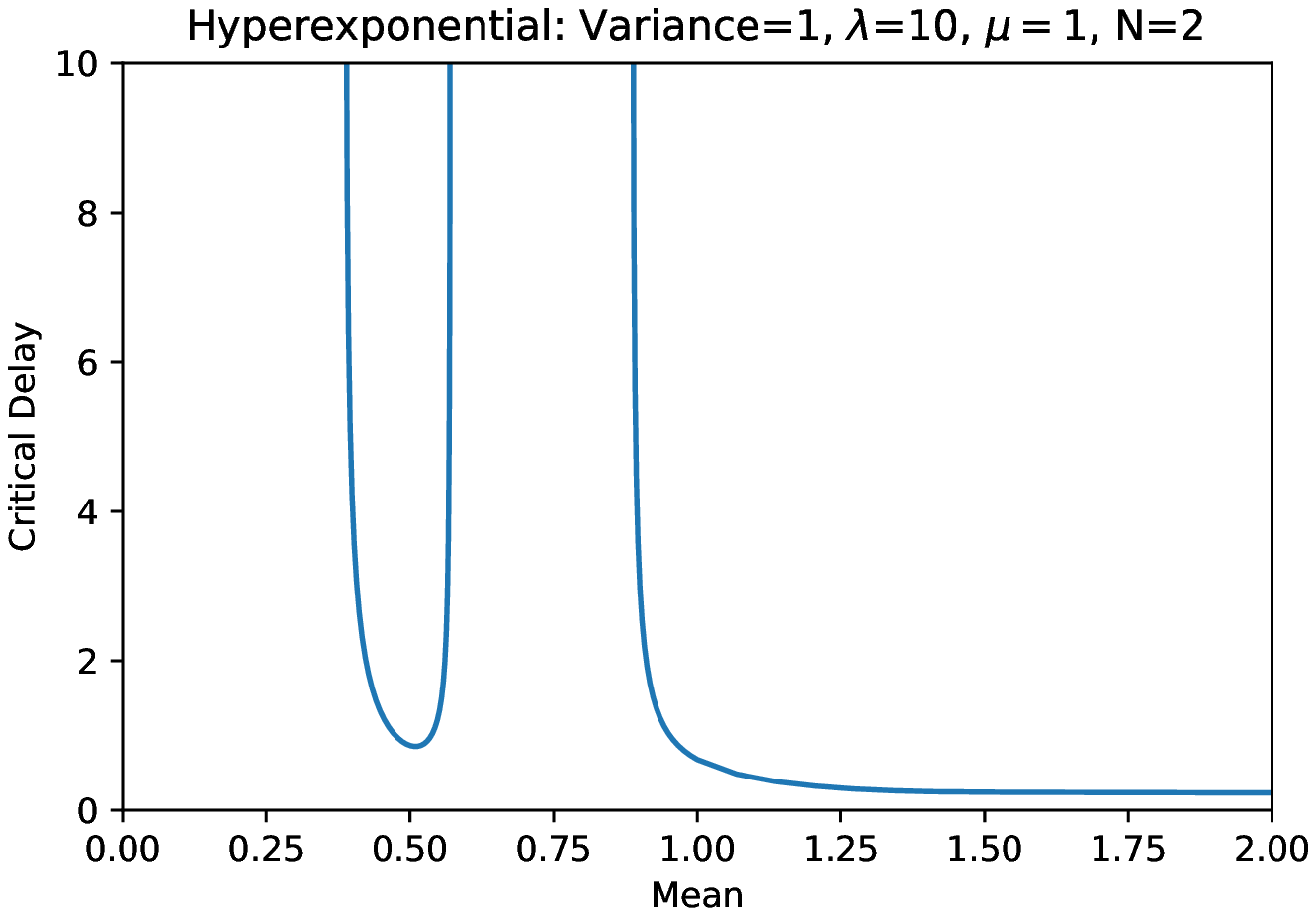} \includegraphics[scale=.6]{./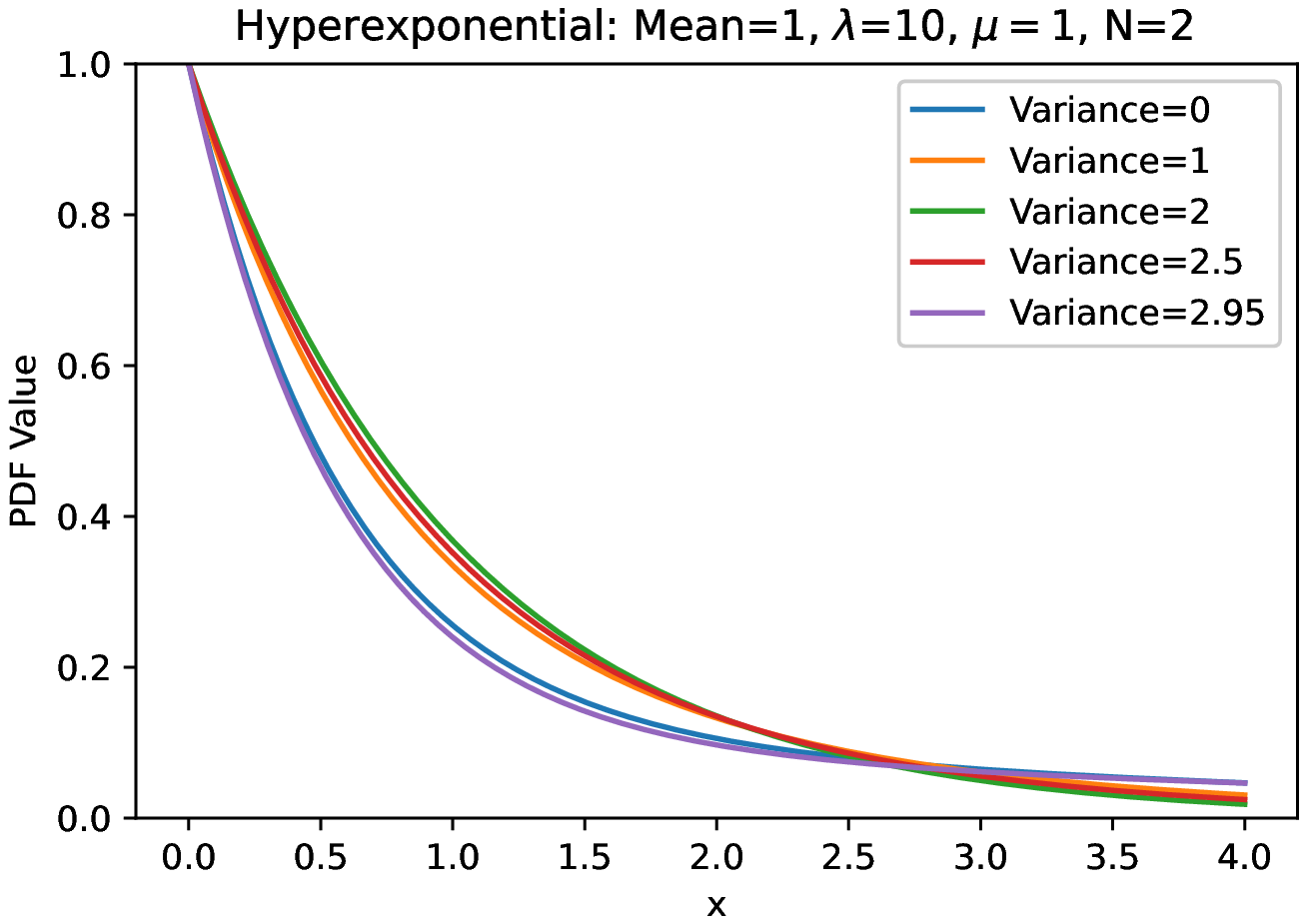}
\caption{Left: The critical delay plotted against the variance of the \textbf{hyperexponential distribution} (in this case, composed of two evenly-weighted exponential distributions) that induces the choice model with a fixed mean of 1. Right: A plot of the probability density function used for some selected values of the variance.}
\label{CHOICE_MODEL_fig_hyperexponential_variance}
\end{figure}

\section{Conclusion} 
\label{CHOICE_MODEL_conclusion_section}
In this paper, we examine choice models informed by utilities that are functions of complementary cumulative distribution functions for some probability distribution and consider an infinite-server fluid queueing system where customers are informed by delayed queue length information. We determine the critical delay of this queueing system in terms of the hazard function of the given probability distribution. We consider how using choice models with functional forms based on various different probability distributions can impact the dynamics of the queueing system. In particular, we see that it is often possible to choose a probability distribution with specific mean and variance to make the critical delay arbitrarily large. This information is useful because such probability distributions can result in a queueing system that is robust to large delays in information. Naturally, there is room for extending this work by considering other information than standard delayed queue length information, such as delayed queue velocity information or updating queue length information. Additionally, other probability distributions could be considered and for larger ranges of parameters. One could also consider other classes of decreasing functions to base the functional form of the choice model on. It could be interesting to focus more attention on choice models induced by an exponential distribution where the distribution parameter has uncertainty and could potentially be viewed as a random variable according to various distributions.

\section{Acknowledgements}

We would like to thank the Center for Applied Mathematics at Cornell University for sponsoring Philip Doldo’s research. Finally, we acknowledge the gracious support of the National Science Foundation (NSF) for Jamol Pender's Career Award CMMI \# 1751975.

\bibliographystyle{plainnat}
\bibliography{main}

\end{document}